\documentclass[11pt]{article}
\usepackage[utf8x]{inputenc}
\usepackage{thm-restate}
\usepackage[letterpaper,top=1in,bottom=1in,left=1in,right=1in]{geometry}
\usepackage[utf8x]{inputenc}
\usepackage{thm-restate}
\usepackage[letterpaper,top=1in,bottom=1in,left=1in,right=1in]{geometry}
\usepackage{amsmath}
\usepackage{amssymb}
\usepackage{amsthm}
\usepackage{hyperref}
\usepackage[capitalize]{cleveref} 
\usepackage{fancyhdr}
\usepackage{graphicx}
\usepackage{xcolor}
\usepackage[shortlabels]{enumitem}

\newcommand{\set}[1]{\left\{#1\right\}}

\usepackage{xspace}
\newcommand{\Erdos}{Erd\H{o}s\xspace}
\newcommand{\Szekely}{Sz\'ekely\xspace}
\newcommand{\Lovasz}{Lov\'asz\xspace}
\newcommand{\Chvatal}{Chv\'atal\xspace}

\newtheorem{lemma}{Lemma}
\newtheorem{definition}{Definition}
\newtheorem{conjecture}{Conjecture}

\newtheorem{proposition}{Proposition}
\newtheorem{theorem}{Theorem}

\newtheorem{corollary}{Corollary}

\title{Small and Large Quasi-Kernels in General Digraphs}

\author{Daniel Rutschmann\thanks{
Current affiliation: Institute of Science and Technology Austria (ISTA), Klosterneuburg, Austria
}\\
\href{mailto:rutschmann@proton.me}{rutschmann@proton.me}\\
\url{https://orcid.org/0009-0005-6838-2628}}

\begin{document}

\thispagestyle{empty}

\maketitle

\begin{abstract}
    In a digraph $D$, a set $Q$ of vertices is a \emph{quasi-kernel} if $Q$ is an independent set
    and every vertex in $D$ can be reached from $Q$ via at most two edges.
    The \emph{small quasi-kernel conjecture} by \Erdos and \Szekely predicts
    that every source-free $n$-vertex digraph contains a quasi-kernel of size at most $n - n/2$.
    \Chvatal and \Lovasz' 1974 work implies a bound of $n - \sqrt{n}+1$.
    In 2024, Sprio shows the bound $n - \frac{1}{4} \sqrt{n \log_2 n}$.
    We show that there always is a quasi-kernel of size at most $n - \frac{1}{51}n^{2/3} / \ln^{2/3}(n)$.

    For the related \emph{large quasi-kernel conjecture} by Spiro,
    we show that there always is a quasi-kernel $Q$ with $|N^+[Q]| \ge 0.2917 \cdot n^{2/3} / \ln^{2/3}(n)$.
\end{abstract}


\paragraph{Funding} This research was supported by the Carlsberg Foundation Young Researcher Fellowship CF21-0302 -- ``Graph Algorithms with Geometric Applications''.

\paragraph{Acknowledgement} The results in this paper were developed during my three-month visit
to the University of Waterloo in Spring 2025, which was part of my PhD program at the Technical University of Denmark.
I would like to thank Sophie Spirkl for putting this problem on my radar
and for many helpful discussions and ideas.

\paragraph{Declarations}
This is a preprint version submitted to the Journal of Graph Theory in October 2025,
and will be updated in accordance with the journal's archiving policy.
All lemmas, theorems, and results were obtained by humans only.
No large language models were used, not even for spell checking.

\section{Introduction}

A \emph{digraph} $D = (V(D), E(D))$ is a finite simple directed graph. For a set $S$ of vertices, let
\[
N^+[S] = S \cup \set{v \ |\ \exists s \in S: sv \in E(D)} \text{ and } N^+(S) = N[S] \setminus S. 
\]
We similarly define $N^-[S]$ and $N^-(S)$.
A \emph{kernel} is an independent set $K$ with $N^+[K] = V(G)$.
Every acyclic digraph has a kernel, but the directed three-cycle does not.
A \emph{quasi-kernel} is an independent set $Q$ with $N^+[N^+[Q]] = V(G)$.
In 1974, \Chvatal and \Lovasz~\cite{chvatal_every_1974} show that every digraph has a quasi-kernel.
In 1976, \Erdos and \Szekely raise the following conjecture.
\begin{conjecture}[Small Quasi-Kernel Conjecture~\cite{erdos_two_2010}]
    Every source-free $n$-vertex digraph $D$ has a quasi-kernel $Q$ of size at most $n / 2$.
\end{conjecture}

This conjecture has remained wide open, despite many attempts at resolving it.
There are many results for special classes of digraphs:
Heard and Huang~\cite{heard_disjoint_2008} show the conjecture for semicomplete multipartite digraphs,
quasi-transitive digraphs, and locally semicomplete digraphs;
Kostochka, Luo and Shan~\cite{kostochka_towards_2022} for all
orientations of graphs of chromatic number $4$, or more generally, all digraphs whose vertices
can be partitioned into two kernel perfect digraphs;
Ai, Gerke, Gutin, Yeo and Zhou~\cite{ai_results_2023} for anti-claw-free digraphs; and
Langlois, Meunier, Rizzi, Vialette and Zhou~\cite{langlois_quasi-kernels_2025} for split digraphs.
\Erdos, Gy\H{o}ri, Mezei, Salia and Tyomkyn~\cite{erdhos_small_2025}
describe the history of the small quasi-kernel conjecture and related problems in a recent survey.

However, for general (source-free) digraphs, not much is known.
Spiro~\cite{spiro_generalized_2024} shows $|Q| \le n - \sqrt{n}$ and
$|Q| \le n - \frac{1}{4} \sqrt{n \log_2 n}$.
Frederickson and Yepremyan (private communication) note that
Spiro's proof can be modified to yield the bound $|Q| \le n - \frac{1}{2} \sqrt{n \log_2 n}$.
We note that \Chvatal and \Lovasz'~\cite{chvatal_every_1974} proof for the existence
of quasi-kernels, which is 2 years older than the small quasi-kernel conjecture,
already yields $|Q| \le n - \sqrt{n} + 1$ (we forward reference \cref{lemm:small_root_n}).
So, 50 years later, despite signifiant efforts, this bound has only been improved by a $\sqrt{\log n}$-factor.
Even just proving the bound $|Q| \le n - n^{1/2 + \varepsilon}$ for some $\varepsilon > 0$ seems difficult.

Spiro~\cite{spiro_generalized_2024} raises the following related conjecture,
which seems to capture the main difficulty in proving the small quasi-kernel conjecture.
\begin{conjecture}[Large Quasi-Kernel Conjecture~\cite{spiro_generalized_2024}]
    Every $n$-vertex digraph $D$ has a quasi-kernel $Q$ with $|Q^+[N]| \ge n/2$.
\end{conjecture}
Spiro proves that the small quasi-kernel conjecture implies the large quasi-kernel conjecture,
and shows the bound $|N^+[Q]| \ge n^{1/3}$.
Conversely, Ai, Liu and Peng~\cite{ai_variable_2025} show that the large quasi-kernel conjecture 
implies a weaker version of the small quasi-kernel conjecture, namely $|Q| \le n - n/3$.
However, this reduction does not apply to sublinear bounds,
such as the $|N^+[Q]| \ge n^{1/3}$ bound of Spiro.

\paragraph{Our contribution} We prove the following for the large and small quasi-kernel conjecture:
\begin{restatable}[Large Quasi-Kernel Conjecture]{theorem}{theoLarge} \label{theo:main_large}
    For $n \ge 2$, every $n$-vertex digraph has a quasi-kernel $Q$ with $|N^+[Q]| \ge 0.2917 \,\frac{n^{2/3}}{\ln^{2/3}n}$.
\end{restatable}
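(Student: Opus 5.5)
We need a plan for proving that every $n$-vertex digraph has a quasi-kernel $Q$ with $|N^+[Q]| \gtrsim (n/\ln n)^{2/3}$. The natural tool is the Chvátal--Lovász construction, made flexible enough to exploit either large out-degrees or a long sequence of peeling steps.

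First recall how Chvátal--Lovász works. We pick a vertex $v$, delete $N^-[v]$, and recurse on the rest to get a quasi-kernel $Q'$. If $v$ has no out-neighbour in $Q'$ we add $v$; otherwise $Q'$ already covers $v$ within two steps.

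We will track $|N^+[Q]|$ through this recursion and split into two regimes according to a threshold $d$.

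\textbf{High out-degree regime.} Suppose some vertex $v$ has out-degree at least $d$ in a suitable induced subdigraph. We then try to force $v$, or a vertex dominating many vertices, into the final quasi-kernel. To do this we apply the recursion with $v$ placed last, so that it is added to $Q$ unless a later choice already covers it. A quasi-kernel is hereditary in the useful sense that a quasi-kernel of $D - N^-[v]$, together with $v$, is almost a quasi-kernel of $D$. Hence we get $|N^+[Q]| \ge d$ whenever $v$ survives. When $v$ does not survive, some $q\in Q'$ is an out-neighbour of $v$, and we need to argue that the structure instead forces another large out-neighbourhood.

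\textbf{Low out-degree regime.} If all out-degrees are below $d$, each Chvátal--Lovász step deletes $N^-[v]$. We choose $v$ to be a vertex of small in-degree, or we use a random ordering. A random greedy process (pick a uniformly random vertex, delete its closed in-neighbourhood, repeat) produces an independent set $I$ with $N^+[N^+[I]]$ large. Each vertex of $I$ either stays in $Q$ or is reached from $Q$ in two steps. We want to show that many steps occur and that $N^+[Q]$ picks up roughly one new vertex per step.

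\textbf{Balancing and the main obstacle.} The second regime should give $|N^+[Q]| \gtrsim n/(d\cdot \mathrm{polylog})$ while the first gives $d$. Balancing naively yields only $\sqrt{n}$, but the target exponent $2/3$ suggests a three-way trade-off with an extra parameter, such as two-step neighbourhood sizes $d^2$. My plan is therefore a dichotomy:
\begin{itemize}
\item if some vertex has $|N^+[N^+[v]]|$ large, we use a second-level greedy argument;
\item otherwise the digraph is locally sparse, and a random independent set of density $p$ retains about $pn$ vertices after conflict deletions (Turán/Lovász-Local-Lemma style), with the $\ln n$ loss from a union or alteration bound.
\end{itemize}
Optimizing $p$ against $d$ gives $(n/\ln n)^{2/3}$, and the constant $0.2917$ comes from optimizing that calculus.

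The hardest step is the first regime: guaranteeing that a high-degree vertex actually lies in, or is dominated by, the final quasi-kernel. The recursion may delete it, or cover it via another vertex with small out-neighbourhood. My first attempt would be to order the Chvátal--Lovász recursion so that high-degree vertices are processed last, and to charge each failure to a distinct out-neighbour.
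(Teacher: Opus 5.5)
\textbf{There is a genuine gap.} Your proposal is a research plan rather than a proof. Both regimes stop exactly where the difficulty lies, and the exponent $2/3$ and the constant $0.2917$ are asserted, not derived.

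In the high out-degree regime you rightly note that nothing forces a vertex $v$ of large out-degree into the final quasi-kernel. Reordering the recursion or charging failures to out-neighbours cannot repair this, because $v$ need not be extendable at all. For example, if a source points to $v$, every quasi-kernel contains that source and so avoids $v$. Moreover, deciding whether a given vertex is extendable is NP-complete.

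The low out-degree regime only reproduces the Chv\'atal--Lov\'asz bound $\sqrt n$, since each CL-stepped vertex ends up in $N^+[Q]$. Your proposed refinement has no mechanism linking it to $|N^+[Q]|$. A random independent set is not a quasi-kernel and need not even be extendable. A large second neighbourhood $N^+[N^+[v]]$ does not by itself contribute anything to $|N^+[Q]|$.

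There is also a convention slip. With the paper's definition $N^+[N^+[Q]]=V(D)$, a CL step deletes $N^+[u]$, and $u$ is added back iff $u\notin N^+[Q']$. Deleting $N^-[v]$, as you do, gives the mirror-image notion.

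\textbf{The missing idea} is to argue by contradiction and count \emph{extendable} vertices, instead of trying to construct a good quasi-kernel directly. The paper proceeds as follows.
\begin{enumerate}
\item Fix $A$ and suppose every quasi-kernel has $|N^+[Q]|\le A$. Then extendability passes from CL subgraphs to $D$, every CL step at an extendable set deletes at most $A$ vertices, and every maximal quasi-kernel has at least $n-A$ in-neighbours.
\item Iterating this (Lemma~\ref{lemm:lemma_one}) yields many disjoint extendable sets, each receiving edges from all but $X$ vertices.
\item Averaging over the set $Y$ of extendable vertices of the residual CL subgraph, with $|Y|\ge 2X$, gives an extendable vertex with many extendable out-neighbours.
\item This produces the coupled recursions $f(n)\ge f(n-A)+g(n)$ and $g(n)\ge\frac{n}{2A\ln n}\big\lfloor f(\lceil(1-1/\ln n)n\rceil)/(2A)\big\rfloor$. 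Their composition multiplies $f$ by about $n^2/(4A^3\ln^2 n)$ per round.
\item For $A\le 0.2917\,(n/\ln n)^{2/3}$ this factor is at least $e^{2/\sqrt3}$. Roughly $\ln n/\sqrt{12}$ rounds, started from $f(n')\ge n'^2/(3A^2)$, then push $f(n)$ above $n$, a contradiction.
\end{enumerate}
This is where the exponent $2/3$ actually comes from: it is forced by $A^3\approx n^2/\ln^2 n$. The logarithmic loss comes from shrinking $n$ by a factor $1-1/\ln n$ in each of $\Theta(\ln n)$ rounds, not from a union or alteration bound.
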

We show a reduction from the small quasi-kernel conjecture to the large quasi-kernel conjecture for sublinear bounds.
\begin{restatable}{theorem}{theoReduction} \label{theo:main_reduction}
    Let $f : \mathbb{N} \to [0, \infty)$ be a non-decreasing function such that
    every $n'$-vertex digraph has a quasi-kernel $Q$ with $|N^+[Q]| \ge f(n')$.
    Then every source-free $n$-vertex digraph has a quasi-kernel $Q$
    of size $|Q| \le n - \frac{\sqrt{2}-1}{3 \sqrt{2}} f(\lceil (2-\sqrt{2}) \cdot n \rceil)$.
\end{restatable}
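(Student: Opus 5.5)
The plan is to turn a large quasi-kernel of a subdigraph into a quasi-kernel of $D$ whose size is bounded away from $n$. The approach has three parts: a Chv\'atal--Lov\'asz style extension, a case split on a density parameter, and a use of source-freeness to absorb the cost of $Q$ itself. The basic accounting is $|Q| \le n - |N^+(Q)|$. So it suffices to find a quasi-kernel $Q$ of $D$ with $|N^+[Q]| - |Q|$ of order $f(\lceil(2-\sqrt2)n\rceil)$. The hypothesis only controls $|N^+[Q]|$, and $Q$ itself might make up most of it. Source-freeness is what lets us recover: every vertex of $Q$ has an in-neighbour outside $Q$, and we will repeatedly trade vertices of $Q$ for in-neighbours.

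\textbf{Step 1 (extension lemma).} First I would record the standard extension step behind \cref{lemm:small_root_n}. Let $X \subseteq V(D)$ and let $Q'$ be a quasi-kernel of $D[X]$. Let $Q''$ be a quasi-kernel of $D - N^-[N^+[Q']]$ (or, more precisely, of the part of $D$ not yet reached backwards). Then $Q'$ together with those vertices of $Q''$ that are not dominated within two steps yields a quasi-kernel of $D$. It keeps $N^+(Q')\setminus Q$ as ``saved'' vertices, and the saved set is not destroyed by the merge. The point is that any deficit $|N^+[Q']|-|Q'|$ obtained inside $D[X]$ survives to $D$.

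\textbf{Step 2 (case split with a threshold).} Fix a parameter $\beta\in(0,1)$, which will be optimized at the end to produce $(2-\sqrt2)$ and $\frac{\sqrt2-1}{3\sqrt2}$. Let $m=\lceil(2-\sqrt2)n\rceil$. Choose $X$ to be a set of $m$ vertices selected greedily so that every vertex of $X$ keeps an in-neighbour in $D$ that can be used later. For example, take a maximal set in which sinks of $D[X]$ have in-neighbours in $V\setminus X$. Apply the hypothesis to $D[X]$ to get $Q'$ with $|N^+[Q']|\ge f(m)$. If $|Q'|\le (1-\beta)|N^+[Q']|$, we already have deficit $\beta f(m)$, and Step 1 finishes. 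Otherwise $Q'$ contains at least $\beta f(m)$ vertices that reach few new vertices. For each such $q$, source-freeness gives an in-neighbour $p_q$. Replacing $q$ by $p_q$ (after discarding conflicts) produces a new independent set. Its out-neighbourhood contains the $q$'s, which are now non-members, and by Step 1 it can still be completed to a quasi-kernel. A charging argument loses at most a factor of about $3$: each $p_q$ can conflict with or be shared by boundedly many $q$'s, and $p_q$ may itself need to be placed among the dominated vertices. This is where the $\frac{1}{3}$ in the constant should come from.

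\textbf{Step 3 (optimization, and the main obstacle).} Balancing the two cases, the loss from restricting to $m$ vertices against the fraction $\beta$, should give the stated $\frac{\sqrt2-1}{3\sqrt2}$ with $m=(2-\sqrt2)n$. Here I use that $f$ is non-decreasing, so $f$ can be evaluated at the smaller argument. The step I expect to be hardest is Step 2's swap. Replacing $q$ by an in-neighbour $p_q$ can break independence with other members of $Q'$, and it can destroy the two-step reachability of vertices that were reached only through $q$. My first attempt would be to choose the $p_q$ from outside $X$. This would be possible by reserving the $n-m=(\sqrt2-1)n$ vertices outside $X$ exactly for this purpose, which is why $X$ is only a $(2-\sqrt2)$-fraction. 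I would then rerun the Chv\'atal--Lov\'asz recursion on the remainder so that any vertex that became unreached is picked up again without re-adding the $q$'s to $Q$.
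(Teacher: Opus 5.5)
Your Step 1 is false as stated, and the rest of the argument leans on it. A quasi-kernel of an arbitrary induced subdigraph $D[X]$ need not be contained in any quasi-kernel of $D$. Take the directed triangle $a\to b\to c\to a$ plus a vertex $q$ with arcs $a\to q$, $b\to q$, $c\to q$; this digraph is source-free. Let $X=\{q\}$. The only independent set containing the quasi-kernel $\{q\}$ of $D[X]$ is $\{q\}$ itself, and it reaches nothing else. The obstruction is precisely unreached in-neighbours of $Q'$ lying outside $X$. So your rule of choosing $X$ so that sinks of $D[X]$ keep in-neighbours outside $X$ works against you.

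Extension is only guaranteed for CL subgraphs, i.e.\ after deleting sets $N^+[S]$ with $S$ independent, and that is how the paper shrinks the digraph. It colours a set $B$ with $|B|\ge\beta n$ blue and repeatedly CL-steps at independent sets $S$ with $|N^+[S]\cap B|<(1-\sqrt{1-\beta})|N^+[S]|$. Each such step deletes more than a $\sqrt{1-\beta}$ fraction of non-blue vertices, so at most $\sqrt{1-\beta}\,n$ vertices disappear in total. In what remains, every independent set $S$ satisfies $|N^+[S]\cap B|\ge(1-\sqrt{1-\beta})|N^+[S]|$; in particular so does the quasi-kernel supplied by $f$. CL unsteps then carry this back to $D$. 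With $\beta=\frac12$ on a $2n$-vertex digraph this gives $(2-\sqrt2)n=(1-\sqrt{1/2})\cdot 2n$ and the factor $\frac{\sqrt2-1}{\sqrt2}$. Your Step 3 never derives these constants; attributing them to reserving $(\sqrt2-1)n$ outside vertices is a guess.

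The second, more serious gap is Step 2. This is where large $|N^+[Q]|$ must actually be turned into small $|Q|$, and you leave it as a sketch. Swapping members $q$ for in-neighbours $p_q$ after the hypothesis has been applied gives no control. The $p_q$ may be adjacent to each other or to the remaining members of $Q'$, so discarding conflicts can lose everything. Even if the result is independent, nothing makes it extendable, and Step 1 cannot supply this. The factor-$3$ charging is asserted, not proved.

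The missing idea is to build the swap into the digraph \emph{before} invoking the hypothesis:
\begin{enumerate}
\item Using source-freeness, add for every $u\in V(D)$ a clone $\tilde u$ of one chosen in-neighbour $v$ of $u$, with the same in- and out-neighbourhoods as $v$. This gives $D'$ on $2n$ vertices; let $B$ be the set of clones.
\item A maximal quasi-kernel $Q'$ of $D'$ contains $v$ whenever it contains a clone of $v$, so $Q'\cap V(D)$ is a quasi-kernel of $D$.
\item Instead of trying to make $|N^+(Q)|$ large, prune $Q'\cap V(D)$ to one witness per vertex that needs one. Fix a maximal matching $M$; then $V(D)\setminus V(M)$ is independent, so the chosen $v$ of each $u\notin V(M)$ lies in $V(M)$.
\item Keep a witness in $N^-_D[u]$ for each $u\in V(M)$ reached by $Q'$, and a witness in $N^-_D[N^-_D[u]]$ for each other $u\in V(M)$. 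Also keep one for each $u\notin V(M)$ whose clone is not reached.
\end{enumerate}
Every $u\notin V(M)$ whose clone is reached is then covered for free through $v$, giving $|Q|\le 2|M|+n-|N^+_{D'}[Q']\cap B|$. There is also the easy bound $|Q|\le n-|M|$, from one CL step per matching edge. Balancing $2|M|$ against $|M|$ is what produces the $\frac13$.
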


By applying \cref{theo:main_reduction} to \cref{theo:main_large},
we get the following bound for the small quasi-kernel conjecture.

\begin{theorem}[Small Quasi-Kernel Conjecture] \label{theo:main_small}
    Every source-free $n$-vertex digraph has a quasi-kernel $Q$ of size $|Q| \le n - \frac{n^{2/3}}{51 \ln^{2/3} n}$.
\end{theorem}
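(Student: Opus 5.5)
The plan is to derive this theorem by plugging \cref{theo:main_large} into \cref{theo:main_reduction}, and then to check the constants. \cref{theo:main_reduction} requires a non-decreasing $f$, so first set $g(x) = 0.2917\, x^{2/3}/\ln^{2/3} x$ for $x \ge 2$. Since $x/\ln x$ is increasing for $x \ge e$, $g$ is increasing on $[3,\infty)$. However, $g(2) > g(3)$, so $g$ itself is not monotone on the integers $\ge 2$. To fix this, define $f(n') = \inf_{m \ge n'} g(m)$ for $n' \ge 2$, and $f(0)=f(1)=0$. Then:
\begin{itemize}
\item $f$ is non-decreasing and non-negative;
\item $f(n') = g(n')$ for $n' \ge 3$, and $f(2) = g(3)$;
\item $f(n') \le g(n')$, so by \cref{theo:main_large} every $n'$-vertex digraph has a quasi-kernel $Q$ with $|N^+[Q]| \ge f(n')$. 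For $n' \le 1$ this is trivial.
\end{itemize}

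Next apply \cref{theo:main_reduction}. A source-free simple digraph has $n \ge 2$, and in fact $n \ge 3$ (a source-free digraph needs a directed cycle, which has length at least $3$ in a simple digraph). Let $m = \lceil (2-\sqrt2) n\rceil$ and $c = \frac{\sqrt2-1}{3\sqrt2} \approx 0.09763$. The reduction gives a quasi-kernel with $|Q| \le n - c f(m)$, so it remains to show $c f(m) \ge \frac{n^{2/3}}{51 \ln^{2/3} n}$.

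\textbf{Main case $m \ge 3$.} Here $f(m) = g(m)$.
\begin{itemize}
\item Since $2-\sqrt2<1$, we have $m \le n$, hence $\ln m \le \ln n$.
\item Since $m \ge (2-\sqrt2)n$, we have $m^{2/3} \ge (2-\sqrt2)^{2/3} n^{2/3} \ge 0.7003\, n^{2/3}$.
\end{itemize}
Therefore
\[
c\, g(m) \ \ge\ 0.09763 \cdot 0.2917 \cdot 0.7003 \,\frac{n^{2/3}}{\ln^{2/3} n} \ \ge\ 0.0199\,\frac{n^{2/3}}{\ln^{2/3} n},
\]
and $0.0199 > 1/51 \approx 0.01961$.

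\textbf{Small case $m = 2$.} This forces $n = 3$ (for $n \ge 2$, $m=2$ means $n \le 3$, and $n=2$ is excluded above). Now $f(2) = g(3)$, and we check directly:
\[
c\, g(3) \approx 0.02848 \cdot 1.953 \approx 0.0556,
\qquad
\frac{3^{2/3}}{51 \ln^{2/3} 3} \approx \frac{1.953}{51} \approx 0.0383.
\]
So the bound holds here too.

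The only delicate point is that $g$ is not monotone near $2$; the infimum trick together with the separate check of the case $m=2$ handles it. Everything else is routine arithmetic, so I would simply verify the decimal constants with slightly conservative rounding.
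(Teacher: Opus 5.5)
Your route is the paper's: make the bound of \cref{theo:main_large} monotone, plug it into \cref{theo:main_reduction}, and check constants. The only difference is in the small cases. The paper sets $f(1)=f(2)=0$ and handles $2\le n\le 10$ separately with \cref{lemm:small_root_n}. Your monotone envelope gives $f(2)=g(3)$, which lets the reduction itself cover $m=2$. That is a valid and slightly cleaner choice.

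One step does not hold under the paper's conventions. You exclude $n=2$ by asserting that a directed cycle in a simple digraph has length at least $3$. Here ``simple'' only rules out loops and parallel arcs, so the digon $u\to v\to u$ is a source-free $2$-vertex digraph; the paper's own small-case check starts at $n=2$. For the digon, $m=\lceil 2(2-\sqrt2)\rceil=2$, a case your argument leaves open.

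The fix is immediate: your $m=2$ computation gives $c\,g(3)\approx 0.0556$, which also exceeds $2^{2/3}/(51\ln^{2/3}2)\approx 0.0397$. Alternatively, any quasi-kernel of a source-free digraph is an independent set in a digraph with at least one arc, so $|Q|\le n-1$. This settles every $n$ for which the subtracted term is at most $1$, including all tiny cases.

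There is also a small numerical slip: $(2-\sqrt2)^{2/3}\approx 0.70010$, not $\ge 0.7003$. Using $0.7000$ the product is still about $0.01993>1/51$, so your main case stands.
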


Thus, we achieve an exponent of $2/3$. For the small quasi-kernel conjecture on general digraphs,
this is the first result with an exponent $> 1/2$.

Our main technical contribution is the following unorthodox strategy for the large quasi-kernel conjecture.
Rather than fixing $n$ and looking at digraphs on $n$ vertices,
we instead fix an integer $A$ and consider all digraphs with $|N^+[Q]| \le A$.
We then study the number of vertices that lie in some quasi-kernel, as a function of $n$.
This function is trivially bounded by $n$, but it also grows rapidly.
Hence, $n$ cannot be too big compared to $A$.

\section{Preliminaries}

In a digraph $D$,
the \emph{in-degree} of a vertex $u$ is $|N^-(u)|$, and we call the vertices in $N^-(u)$ the \emph{in-neighbors} of $u$.
Similarly, $|N^+(u)|$ is the \emph{out-degree} of $u$, and the vertices in $N^+(u)$ are \emph{out-neighbors}.

\paragraph{CL steps}
Let $D$ be a non-empty digraph.
\Chvatal and \Lovasz~\cite{chvatal_every_1974}
introduce the following process for finding a quasi-kernel in $D$.
\begin{itemize}
    \item (CL step) Pick $u \in V(D)$ and form the subdigraph $D' := D - N^+[u]$.
    \item (CL unstep) Let $Q'$ be any quasi-kernel of $D'$.
    If $u \in N^+[Q']$, then $Q'$ is a quasi-kernel of $D$, and otherwise, $Q' \cup \{u\}$ is a quasi-kernel of $D$.
\end{itemize}
We will use the term \emph{CL (un-)step} both as a noun and as a verb.
\Chvatal and \Lovasz~\cite{chvatal_every_1974}
CL~step at vertices $u_1, \dots, u_k$ until all vertices have been deleted.
Then, starting from $Q' = \emptyset$, they CL unstep at $u_k, \dots, u_1$,
which yields a quasi-kernel of the original digraph.
This shows that every digraph has a quasi-kernel.

In general, if we perform some sequence of CL steps,
then we should do the CL unsteps in reverse order.
The next two Lemmas illustrate baseline bounds for the large and small quasi-kernel conjecture that can be obtained
directly from \Chvatal and \Lovasz' methods.
First, if we repeatedly CL~step at a vertex of minimum out-degree,
we get the following bound for the large quasi-kernel conjecture:

\begin{lemma} \label{lemm:large_root_n}
    Every $n$-vertex digraph $D$ has a quasi-kernel $Q$ with $|N^+[Q]| \ge \sqrt{n}$.
\end{lemma}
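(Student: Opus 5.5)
We run the Chv\'atal--Lov\'asz process, always CL~stepping at a vertex of minimum out-degree in the current digraph, and then case-split on the largest out-degree this minimum ever takes. Let $D_0 = D$. Given a nonempty $D_{i-1}$, pick $u_i$ of minimum out-degree $d_i$ in $D_{i-1}$ and set $D_i := D_{i-1} - N^+_{D_{i-1}}[u_i]$. The process stops at the first $k$ with $D_k$ empty. Each step deletes exactly $d_i + 1$ vertices, so $\sum_{i=1}^k (d_i+1) = n$. We then CL~unstep at $u_k, \dots, u_1$ starting from $\emptyset$, which yields a quasi-kernel $Q_{i-1}$ of $D_{i-1}$ for each $i$; in particular $Q_0$ is a quasi-kernel of $D$.

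\emph{Case 1: some $d_i \ge \sqrt{n}-1$.} The key observation is that minimum out-degree at least $d_i$ in $D_{i-1}$ makes every nonempty quasi-kernel of $D_{i-1}$ large. Indeed, $Q_{i-1}$ is nonempty because $D_{i-1}$ is nonempty. Any $q \in Q_{i-1}$ has at least $d_i$ out-neighbors in $D_{i-1}$, so $|N^+_{D_{i-1}}[Q_{i-1}]| \ge d_i + 1 \ge \sqrt{n}$.

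It remains to lift this bound from $D_{i-1}$ to $D$. Out-neighborhoods only grow when we pass to a supergraph, so it suffices to check that the set $Q_{i-1}$ survives the unsteps at $u_{i-1}, \dots, u_1$, i.e.\ that $Q_{i-1} \subseteq Q_0$. This holds because each unstep either keeps the current set or adds a vertex to it. Hence $Q_{i-1} \subseteq Q_0$, and
\[
|N^+_D[Q_0]| \ge |N^+_{D_{i-1}}[Q_{i-1}]| \ge \sqrt{n}.
\]

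\emph{Case 2: all $d_i < \sqrt{n}-1$.} Then each step deletes fewer than $\sqrt{n}$ vertices, so $k > n/\sqrt{n} = \sqrt{n}$. We claim that the sets $N^+_{D_{i-1}}[u_i]$ partition $V(D)$ and that each of them meets $N^+_D[Q_0]$. If $u_i \in Q_0$ this is immediate. If $u_i \notin Q_0$, then at the unstep at $u_i$ we had $u_i \in N^+[Q_i]$ in $D_{i-1}$, so some $q \in Q_i \subseteq Q_0$ has an edge $qu_i$ and $u_i \in N^+_D[Q_0]$. Either way $u_i \in N^+_D[Q_0]$ for all $i$. The $u_i$ are distinct, so $|N^+_D[Q_0]| \ge k > \sqrt{n}$.

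The two cases together give the lemma. The only step that needs care is the monotonicity $Q_{i-1} \subseteq Q_0$ used in Case~1, and it follows directly from the definition of the CL unstep. The bound on $d_i$ in each case is where the minimum out-degree choice is used.
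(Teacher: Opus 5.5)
Your proof is correct and is essentially the paper's argument: CL~step at a vertex of minimum out-degree, then split on whether some intermediate digraph has minimum out-degree at least $\sqrt{n}-1$ (any surviving quasi-kernel vertex then already gives $|N^+[Q]| \ge \sqrt{n}$) or not (then there are more than $\sqrt{n}$ steps, and every $u_i$ lies in $N^+[Q]$ after unstepping). You state the monotonicity $Q_{i-1} \subseteq Q_0$ of the unsteps more explicitly than the paper does, but the route is the same.
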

\begin{proof}
    Repeatedly CL~step at a vertex of minimum out-degree until no vertices remain.
    Then, perform all CL unsteps to get a quasi-kernel $Q$ of $D$.
    If we ever encounter a digraph $D'$ with minimum out-degree $\ge \sqrt{n} - 1$, then
    $|N^+[Q]| \ge \sqrt{n}$ since $Q$ contains a vertex of $D'$.

    If this doesn't happen, then every CL step deletes at most $\sqrt{n}$ vertices.
    Hence, we perform at least $\sqrt{n}$ CL steps.
    Observe that, if we CL unstep at a vertex $u$, then we always have $u \in N^+[Q]$.
    Indeed, either $u$ was already in $N^+[Q']$,
    or, if not, then we add $u$ to $Q'$.
    Hence, after $\sqrt{n}$ CL~unsteps, $|N^+[Q]| \ge \sqrt{n}$.
\end{proof}

If we always CL~step at a non-source vertex, we get a similar bound for the small quasi-kernel conjecture.

\begin{lemma} \label{lemm:small_root_n}
    Every source-free $n$-vertex digraph $D$ has a quasi-kernel $Q$ with $|Q| \le n - \sqrt{n} + 1$.
\end{lemma}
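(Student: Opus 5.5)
The plan is to split into two cases according to the maximum out-degree of $D$, using a single forced CL step in the high-degree case and a sequence of CL steps at non-source vertices in the other. The first case is fully worked out below; the second relies on a counting inequality that I have not verified.

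\textbf{Case 1: some vertex $v$ has $|N^+(v)| \ge \sqrt{n}-1$.} Perform one CL step at $v$, forming $D' = D - N^+[v]$. Let $Q'$ be any quasi-kernel of $D'$, which exists by \Chvatal and \Lovasz. Then CL unstep at $v$.
\begin{itemize}
    \item If $v \notin N^+[Q']$, the quasi-kernel is $Q = Q' \cup \{v\}$. Its size satisfies $|Q| \le 1 + |V(D')| = n - |N^+(v)| \le n - \sqrt{n} + 1$.
    \item Otherwise $Q = Q' \subseteq V(D')$, so $|Q| \le n - |N^+[v]| \le n - \sqrt{n}$.
\end{itemize}
Either way the bound holds. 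Source-freeness is not needed here.

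\textbf{Case 2: every out-degree is at most $\sqrt{n}-2$.} Here I would mimic \cref{lemm:large_root_n}, but with the roles reversed. We repeatedly CL step at vertices $u_1, \dots, u_k$, always choosing a vertex with an in-neighbour in the current digraph whenever one exists, and then CL unstep in reverse order. Every vertex of the final $Q$ is one of the $u_i$, so we must show that many vertices are \emph{not} in $Q$.

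Two kinds of vertices are excluded from $Q$:
\begin{itemize}
    \item the deleted out-neighbours $N^+(u_i)$, which are never step vertices;
    \item step vertices $u_i$ that were already covered by $Q'$ at their unstep, which then lie in $N^+(Q)$.
\end{itemize}
Source-freeness of $D$ controls the steps at vertices that are isolated in the current digraph. If such a $u$ had an in-neighbour $w$ in $D$, then $w$ cannot be an earlier step vertex, since then $u$ would have been deleted as an out-neighbour of $w$. So $w$ was deleted as an out-neighbour of some earlier step vertex $u_j$. Hence $u \in N^+[N^+[u_j]]$, and each step vertex $u_j$ can ``produce'' at most $d + d^2$ later isolated steps, where $d \le \sqrt{n}-2$.

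The goal is the inequality $|N^+(Q)| + \sum_i |N^+(u_i)| \ge \sqrt{n}-1$, with the two terms counting disjoint excluded vertices. Since $|Q| \le n - |N^+(Q)|$ by independence, this gives the claim. The natural way to get it is a Lemma~\ref{lemm:large_root_n}-style dichotomy on the number of steps:
\begin{itemize}
    \item if there are few steps, the deleted out-neighbourhoods are large in total;
    \item if there are many steps, the bounded out-degree forces many non-isolated steps, each contributing at least one deleted out-neighbour.
\end{itemize}

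\textbf{Main obstacle.} The delicate point is this counting in Case 2. The difficulty is ensuring that vertices which become sources or isolated in the current digraph do not all end up in $Q$. If the direct count fails, I would instead try choosing the step vertex as a non-source of minimum out-degree, and tracking, for each vertex added to $Q$, a private in-neighbour outside $Q$.
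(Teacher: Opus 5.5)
Your Case 1 is correct and is the paper's first case. The gap is Case 2, which you leave unproved, and the sketch would not close as set up.

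\textbf{Why the sketch fails.}
\begin{enumerate}
\item Under your rule (step at a vertex that has an in-neighbour), a step can be taken at a vertex with no out-neighbour in the current digraph, and it deletes only itself. So the claim that non-isolated steps each contribute a deleted out-neighbour is false. As a result, the number of steps is not bounded by the number of excluded vertices.
\item Your target inequality is mis-stated. If $u_i\in Q$, its deleted out-neighbours lie in $N^+(Q)$, so the two terms are not disjoint.
\item Charging each leftover isolated vertex to an earlier step vertex $u_j$, at up to $d+d^2$ per step, is too lossy. With $s$ steps it gives only $n\le s(d+1)^2$, which says almost nothing when $d$ is close to $\sqrt{n}$.
\end{enumerate}

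\textbf{The missing idea.} In the paper's argument you CL step only at vertices that still have an out-neighbour, until the digraph is edgeless. Leftovers are then charged to \emph{good} vertices (deleted out-neighbours, which never enter $Q$) rather than to step vertices.
\begin{itemize}
\item Every step creates at least one good vertex. So with $k$ good vertices there are at most $k$ steps and at most $2k$ deleted vertices.
\item Each leftover $x$ has a $D$-in-neighbour $w$ by source-freeness, and $w$ was deleted. It is not a step vertex, since $x$ would then have been deleted with $N^+(w)$. So $w$ is good, and $w$ has at most $\sqrt{n}-2$ out-neighbours.
\item Hence $n-2k\le k(\sqrt{n}-2)$, i.e.\ $n\le k\sqrt{n}$, so $k\ge\sqrt{n}$ and $|Q|\le n-k$.
\end{itemize}

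\textbf{A simpler fix.} Your fallback idea works once you drop the word ``private.'' Take \emph{any} quasi-kernel $Q$ (indeed any independent set). Each $q\in Q$ has an in-neighbour, which lies outside $Q$ by independence. Each vertex outside $Q$ is the in-neighbour of at most $d\le\sqrt{n}-2$ vertices of $Q$. Therefore $|Q|\le d(n-|Q|)$, and so
\[
n-|Q|\ge \frac{n}{d+1}\ge \frac{n}{\sqrt{n}-1}>\sqrt{n}.
\]
So Case 2 needs no special sequence of CL steps at all.
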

\begin{proof}
    Observe that, if we CL step at a vertex $u$, then the vertices in $N^+(u)$
    get deleted and they cannot end up in $Q$ when we CL unstep.
    We call these \emph{good} vertices.
    Our goal is to find a sequence of CL steps with at least $\sqrt{n} - 1$ good vertices.
    Then, the CL unsteps yield a quasi-kernel $Q$ with $|Q| \le n - \sqrt{n} + 1$.

    If $D$ contains a vertex $u$ with out-degree $\ge \sqrt{n} - 1$, then we
    CL step at $u$, which immediately yields $\sqrt{n} - 1$ good vertices.
    Suppose therefore that $G$ has maximum out-degree $\le \sqrt{n} - 2$.
    Repeatedly CL step at a non-source vertex until the digraph contains only sources.
    Let $k$ be the number of good vertices.
    A CL~step at a non-source vertex creates at least one good vertex,
    so we performed at most $k$ CL steps, and delete at most $2k$ vertices in total.
    On the other hand, a CL step can only create sources at the out-neighbors of
    good vertices. Since $D$ has maximum out-degree $\le \sqrt{n} - 2$,
    we create at most $k \cdot (\sqrt{n} - 2)$ sources in total.
    But after all CL steps, the remaining vertices are all sources. Hence, $n - 2k \le k \cdot (\sqrt{n} - 2)$.
    This implies $k \ge \sqrt{n}-1$.
\end{proof}

We generalize CL steps in the following way:
If $S$ is an independent set, we may perform CL steps or CL unsteps at all vertices in $S$.
We may process the vertices in $S$ in any order, this does not affect the result. This yields the following operations:
\begin{itemize}
    \item (CL step at $S$) Form the subdigraph $D' := D - N^+[S]$.
    \item (CL unstep at $S$) Let $Q'$ be any quasi-kernel of $D'$. Put $S' = \{u \in S \ | \ u \notin N^-[D']\}$,
        then $Q' \cup S'$ is a quasi-kernel of $D$.
\end{itemize}

Finally, we introduce a few more notions that are useful for our proofs.

\begin{definition}
    A set $S$ of vertices is \emph{extendable} if there is a quasi-kernel $Q \supseteq S$.
    A vertex $u$ is extendable if $\{u\}$ is extendable.
\end{definition}
Any quasikernel is an extendable set,
and every extendable set is an independent set.
Deciding whether a vertex is extendable is an NP-complete problem~\cite{croitoru_note_2015}.

\begin{definition}
    A CL subgraph of $D$ is a digraph obtained via zero or more CL steps on $D$.
\end{definition}
If $D' \subseteq D$ is a CL subgraph, then any extendable set in $D'$ is also an extendable set in $D$.
If $|N^+[Q]| \le A$ holds for every quasi-kernel $Q$ in $D$, then $|N^+[S]| \le A$ also
holds for every extendable set $S$ in some CL subgraph $D'$ of $D$. In particular, a CL step at an extendable set (or extendable vertex) deletes at most $A$ vertices.

\section{Large Quasi-Kernel Conjecture}

We prove the following bound for the large quasi-kernel conjecture.

\theoLarge*

To this end, we fix a positive integer $A$, later to be set to $\lfloor0.2917 \cdot n^{2/3} / \ln^{2/3}(n)\rfloor$,
and study the family of all digraphs $D$ with $|N^+[Q]| \le A$ for all quasi-kernels $Q$ of $D$.
Our strategy is to first find many extendable vertices,
and then find many edges between extendable vertices.
If this reveals an extendable vertex with more than $A$ out-neighbors,
then $|N^+[Q]| > A$, a contradiction which shows that no $n$-vertex digraphs has $N^+[Q] \le A$ for all quasi-kernels $Q$.

\begin{lemma} \label{lemm:out_edges}
    Let $D$ be a $n$-vertex digraph with $|N^+[Q]| \le A$ for all quasi-kernels $Q$ of $D$.
    Then
    \[
        |N^-(Q) \setminus N^+(Q)| \ge n - A \quad \text{ for every \emph{maximal} quasi-kernel $Q$ of $D$.}
    \]
\end{lemma}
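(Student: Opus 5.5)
The plan is to show that every vertex outside $N^+[Q]$ already lies in $N^-(Q)\setminus N^+(Q)$, and then count. By hypothesis $|N^+[Q]|\le A$, so the set $W := V(D)\setminus N^+[Q]$ has $|W|\ge n-A$. It therefore suffices to prove $W\subseteq N^-(Q)\setminus N^+(Q)$.

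Fix $v\in W$. Since $v\notin N^+[Q]$, we have $v\notin Q$ and $v\notin N^+(Q)$. So the only thing left to show is $v\in N^-(Q)$, that is, that $v$ has an out-neighbor in $Q$. Suppose for contradiction that it does not. Then there is no edge from $v$ to $Q$ (as $v\notin N^-(Q)$) and no edge from $Q$ to $v$ (as $v\notin N^+(Q)$). Since $Q$ is independent and $v\notin Q$, the set $Q\cup\{v\}$ is independent.

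Moreover, $N^+[N^+[Q\cup\{v\}]]\supseteq N^+[N^+[Q]]=V(D)$, so $Q\cup\{v\}$ is a quasi-kernel strictly containing $Q$. This contradicts the maximality of $Q$. Hence $v\in N^-(Q)$, giving $W\subseteq N^-(Q)\setminus N^+(Q)$ and the claimed bound $|N^-(Q)\setminus N^+(Q)|\ge |W|\ge n-A$.

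There is no real obstacle here. The only point needing care is the bookkeeping: $v\notin N^+[Q]$ excludes both $v\in Q$ and $v\in N^+(Q)$, which is exactly what both the independence of $Q\cup\{v\}$ and the conclusion $v\notin N^+(Q)$ require. Maximality is used only to rule out adding an isolated-from-$Q$ vertex.
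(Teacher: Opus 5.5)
Your proof is correct and is essentially the paper's argument: maximality forces $V(D) = Q \cup N^+(Q) \cup N^-(Q)$, so every vertex outside $N^+[Q]$ lies in $N^-(Q)\setminus N^+(Q)$, and the bound then follows from $|N^+[Q]|\le A$. You also spell out why maximality gives this covering: adding a vertex with no edges to or from $Q$ would yield a strictly larger quasi-kernel. The paper leaves that step implicit.
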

\begin{proof}
Since $Q$ is maximal, we have $N^+(Q) \cup Q \cup N^-(Q) = V(D)$.
Hence, $|N^-(Q) \setminus N^+(Q)| = n - |N^+[Q]| \ge n - A$.
\end{proof}

\begin{lemma} \label{lemm:lemma_one}
    Let $D$ be a $n$-vertex digraph with $|N^+[Q]| \le A$ for all quasi-kernels $Q$ of $D$.
    Let $X \le n$ be an integer and put $k = \lfloor\frac{X}{A}\rfloor$.
    Then, there is a quasi-kernel $Q$ such that $N^+[Q]$ contains $k$ disjoint non-empty
    extendable sets $S_1, \dots, S_k$ with $|N^-(S_i)| \ge n - X$.
\end{lemma}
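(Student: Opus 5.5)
We build the sets $S_1,\dots,S_k$ by a sequence of CL steps, where each $S_i$ is a maximal quasi-kernel of the current CL subgraph. Then we CL unstep in reverse order to get one quasi-kernel $Q$ of $D$ whose closed out-neighborhood contains every $S_i$. The bound on $|N^-(S_i)|$ will come from \cref{lemm:out_edges}, applied in the CL subgraph rather than in $D$.

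Concretely, put $D_0 := D$. For $i = 1,\dots,k$, let $S_i$ be a maximal quasi-kernel of $D_{i-1}$ and set $D_i := D_{i-1} - N^+[S_i]$.

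\begin{itemize}
    \item \emph{The CL subgraphs satisfy the hypothesis.} Each $D_{i-1}$ is a CL subgraph of $D$. So every quasi-kernel of $D_{i-1}$ is extendable in $D$, and hence satisfies $|N^+[\cdot]| \le A$ by the remark after the definition of CL subgraphs.
    \item \emph{Size of $D_{i-1}$.} Since $|N^+[S_j]| \le A$ for each $j$, we get $|V(D_{i-1})| \ge n - (i-1)A$. For $i \le k$ we have $(i-1)A \le X - A < n$, so $D_{i-1}$ is non-empty and $S_i \neq \emptyset$. (If $D_{i-1}$ were empty we could simply stop, but this does not occur.)
    \item \emph{Basic properties of $S_i$.} $S_i$ is extendable in $D_{i-1}$, hence extendable in $D$. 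The sets are pairwise disjoint, since each $S_i$ is deleted before $S_{i+1}$ is chosen.
    \item \emph{In-neighborhood bound.} Apply \cref{lemm:out_edges} to $D_{i-1}$ with its maximal quasi-kernel $S_i$. This gives
    \[
    |N^-_{D_{i-1}}(S_i)| \ge |V(D_{i-1})| - A \ge n - iA \ge n - kA \ge n - X.
    \]
    In-neighbors in the subgraph $D_{i-1}$ are also in-neighbors in $D$, so $|N^-(S_i)| \ge n - X$.
\end{itemize}

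For the final quasi-kernel, let $Q_k$ be any quasi-kernel of $D_k$ (the empty set if $D_k$ is empty). For $i = k,\dots,1$, CL unstep at $S_i$: set $Q_{i-1} := Q_i \cup S_i'$, where $S_i'$ consists of the vertices of $S_i$ that are not out-neighbors of $Q_i$. By the CL unstep, $Q_{i-1}$ is a quasi-kernel of $D_{i-1}$.

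Each vertex of $S_i$ either lies in $N^+(Q_i)$ or is added to $Q_{i-1}$, so $S_i \subseteq N^+[Q_{i-1}]$. Later unsteps only add vertices to the quasi-kernel, so $N^+[Q_{i-1}] \subseteq N^+[Q_0]$. Taking $Q := Q_0$ gives $S_i \subseteq N^+[Q]$ for all $i$, as required.

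The main point needing care is the in-neighbor bound. We must apply \cref{lemm:out_edges} in the CL subgraph $D_{i-1}$, and that requires knowing the hypothesis ``$|N^+[Q]| \le A$ for all quasi-kernels'' passes to CL subgraphs, which is exactly the remark after the definition. Beyond that, the steps are the size bookkeeping $|V(D_{i-1})| \ge n-(i-1)A$ and checking that the unstep keeps each $S_i$ inside $N^+[Q]$.
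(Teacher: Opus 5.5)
Your proposal is correct and is essentially the paper's proof. The paper inducts on $X$: it takes a maximal quasi-kernel of $D$ as $S_k$, applies \cref{lemm:out_edges}, CL steps at it, recurses on the CL subgraph with $X' = X - A$, and then CL unsteps. Your explicit iteration is that induction unrolled, and it additionally spells out details the paper leaves implicit, namely the non-emptiness of each $S_i$ and that the hypothesis passes to CL subgraphs.
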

\begin{proof}
    We do induction on $X$. For $X < A$ the Lemma holds since $k = 0$.
    Suppose now that $X \ge A$. Let $S_k$ be any maximal quasi-kernel of $D$,
    then $|N^-(S_k)| \ge n - A$ by \cref{lemm:out_edges}. Moreover, $S_k$ is an independent set.
    We CL~step at $S_k$ to get digraph $D' \subseteq D$.
    Then $n' = |V(D')| \ge n - |N^+[S_k]| \ge n - A$, so we put $X' = X - A$.
    By induction, there is a quasi-kernel $Q'$ of $D'$ that contains
    $D'$-extendable sets $S_1, \dots, S_{k-1}$ with $|N_{D'}^-(S_i)| \ge n' - X' \ge n - X$.
    Since $D'$ is a CL subgraph of $D$, these sets are also $D$-extendable.
    Starting from $Q'$, we CL unstep at $S_k$ to get a quasi-kernel $Q \supseteq Q'$ of $D$.
    Then $N^+[Q]$ contains not only $S_1, \dots, S_{k-1}$, but also $S_k$.
\end{proof}

We can repeatedly apply \cref{lemm:lemma_one} and CL step at the resulting quasi-kernel $Q$.
This yields many sets $S_i$ that are both extendable and incident to many edges.
The next two lemmas pursue this approach, and allow us to prove the weaker bound $A \ge \frac{1}{3} n^{3/5}$.

\begin{lemma} \label{lemm:weak_extendable_vertices}
    Let $n \ge 1$.
    Let $D$ be a $n$-vertex digraph with $|N^+[Q]| \le A$ for all quasi-kernels $Q$ of $D$.
    Then $D$ contains (at least) $1 + \frac{1}{2} \frac{n}{A} \Big(\frac{n}{A} - 1 \Big) \ge \frac{n^2}{3A^2}$ extendable vertices.
\end{lemma}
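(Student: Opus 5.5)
We prove the bound $1 + \frac12 \frac{n}{A}(\frac{n}{A}-1)$ by induction on $n$, peeling off a quasi-kernel obtained from \cref{lemm:lemma_one} with a CL step and counting the extendable vertices gained at each stage. The quantity $\frac12 m(m-1)$ with $m = n/A$ looks like a sum $\sum_{j<m} j$. So the aim is that removing roughly $A$ vertices reveals about $n/A - 1$ new extendable vertices, each with many in-neighbours.

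Concretely, apply \cref{lemm:lemma_one} with $X = n$, hence $k = \lfloor n/A \rfloor$. This gives a quasi-kernel $Q$ such that $N^+[Q]$ contains disjoint non-empty extendable sets $S_1,\dots,S_k$. Every vertex of every $S_i$ is extendable, so $N^+[Q]$ contains at least $k$ distinct extendable vertices, one from each $S_i$.

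Next, CL step at $Q$ to obtain $D' = D - N^+[Q]$, which has $n' \ge n - A$ vertices. The digraph $D'$ is a CL subgraph, so it still satisfies the hypothesis, and its extendable vertices are extendable in $D$. They are also disjoint from $N^+[Q]$, so no vertex is counted twice. By induction $D'$ has at least $1 + \frac12\frac{n'}{A}(\frac{n'}{A}-1)$ extendable vertices. We may assume $n'$ is exactly $n - |N^+[Q]|$ with $|N^+[Q]|\le A$, and the bound is monotone in $n'$ (when $n' \ge A$ or so). Writing $m=n/A$, we therefore get at least
\[
\lfloor m\rfloor + 1 + \tfrac12 (m-1)(m-2)
\]
extendable vertices in $D$.

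We need this to be at least $1+\frac12 m(m-1)$. The difference is $\frac12 m(m-1) - \frac12(m-1)(m-2) = m-1$, and $\lfloor m\rfloor \ge m-1$ holds, so the induction closes. The base case is $n \le A$ (or $D'$ empty). There we only need $1 + \frac12 m(m-1) \le 1$ extendable vertex when $m\le 1$, which holds since any quasi-kernel of a non-empty digraph is non-empty. The step where the bound for $n'$ is negative or non-monotone, when $n'/A<1$, is handled the same way, since the bound is at most $1$ there and $D'$, if non-empty, contains a quasi-kernel vertex. If $D'$ is empty, then $n \le A$ and the base case applies.

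The only delicate point is this monotonicity and small-$m$ bookkeeping. I would handle it by stating the inductive claim as $\max(1,\,1+\frac12 m(m-1))$ for $n\ge1$.

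Finally, for $\frac{n^2}{3A^2}$ we use $1+\frac12 m(m-1) \ge m^2/3$, which is equivalent to $\frac16 m^2 - \frac12 m + 1 \ge 0$. The discriminant is $\frac14 - \frac23<0$, so this holds for all real $m$.

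Notably, the sets $S_i$ having large in-neighbourhoods is not needed here. That property is presumably used in the next lemma.
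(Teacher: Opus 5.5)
Your proposal is correct and follows the paper's proof essentially step for step. Both apply \cref{lemm:lemma_one} with $X=n$ to get $\lfloor n/A\rfloor$ extendable vertices inside $N^+[Q]$, CL step at $Q$, induct on the remaining $n'\ge n-A$ vertices, and close with $\lfloor m\rfloor\ge m-1$. Your $\max(1,\cdot)$ formulation of the inductive claim is a genuine improvement. The paper substitutes $n'/A\ge n/A-1$ into $\tfrac12 y(y-1)$ without comment, and that function is not monotone for $y<\tfrac12$, so the substitution can fail when $n<\tfrac32 A$; your version avoids this.
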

\begin{proof}
    We do induction on $n$. The Lemma holds for $n \le A$ as $D$ contains at least one extendable vertex.
    Suppose now that $n \ge A$.
    Put $X = n$ and $k = \lfloor \frac{X}{A} \rfloor \ge \frac{n}{A} - 1$.
    Apply \cref{lemm:lemma_one} to $D$ to obtains a quasi-kernel $Q$
    such that $N^+[Q]$ contains $k$ disjoint non-empty extendable sets.
    In particular, $N^+[Q]$ contains (at least) $k$ extendable vertices.
    CL step at $Q$, which deletes $N^+[Q]$ from $D$.
    Let $D'$ be the resulting digraph and $n' = |V(D')| \ge n-A$.
    By induction, $D'$ contains $1 + \frac{1}{2} \frac{n'}{A} (\frac{n'}{A} - 1)$ extendable vertices.
    Hence $D$ contains
    \[
        1 + \frac{1}{2} \frac{n'}{A} \Big(\frac{n'}{A} - 1\Big) + k \ge 1 + \frac{1}{2} \Big(\frac{n}{A} - 1\Big) \Big(\frac{n}{A} - 2\Big) + \frac{n}{A} - 1 = 1 + \frac{1}{2} \Big(\frac{n}{A} - 1\Big) \frac{n}{A}
    \]
    extendable vertices. Finally, $1 + \frac{1}{2} x (x-1) \ge \frac{1}{3} x^2$ for all $x \in \mathbb{R}$.
\end{proof}

\begin{lemma} \label{lemm:weak_extendable_in}
    Let $n \ge 1$.
    Let $D$ be a $n$-vertex digraph with $|N^+[Q]| \le A$ for all quasi-kernels $Q$ of $D$.
    Then $D$ contains an extendable vertex with $\ge \frac{1}{81} n^3 / A^4 - \frac{1}{6} n / A $ extendable out-neighbors.
\end{lemma}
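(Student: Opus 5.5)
The plan is a double count of edges between extendable vertices. Lower-bound the number of pairs $(v,w)$ with $v$ extendable, $w$ extendable and $vw \in E(D)$, then average over the tails $v$. \cref{lemm:weak_extendable_vertices} supplies many extendable vertices. \cref{lemm:lemma_one} supplies many disjoint extendable sets $S_i$, each of which is entered by almost all vertices of the digraph. Every vertex of an extendable set is itself extendable, since it lies in the same quasi-kernel, so each $S_i$ consists of extendable heads.

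\textbf{Step 1 (one round).} Let $E$ be the number of extendable vertices of $D$, so $E \ge \frac{n^2}{3A^2}$ by \cref{lemm:weak_extendable_vertices}. Apply \cref{lemm:lemma_one} with a parameter $X$ to be chosen, giving $k \ge X/A - 1$ disjoint non-empty extendable sets $S_i$ with $|N^-(S_i)| \ge n - X$. At most $X$ vertices lie outside $N^-(S_i)$. Hence at least $E - X$ extendable vertices have an out-neighbor in $S_i$, and that out-neighbor is extendable. Because the $S_i$ are disjoint, the resulting edges are distinct for different $i$. This gives at least $k(E-X)$ edges from extendable vertices to extendable vertices. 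Averaging over the $E$ extendable tails yields one extendable vertex with at least $k(E-X)/E$ extendable out-neighbors.

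\textbf{Step 2 (iterate).} As the text before the lemma suggests, repeat Step 1. After obtaining $Q_1$ from \cref{lemm:lemma_one}, CL step at $Q_1$ to get $D_1$, then apply the argument in $D_1$, and so on for $j = 1, \dots, t$. Each round deletes at most $A$ vertices, so $n_j \ge n - jA$. Each $D_j$ is a CL subgraph of $D$, so its extendable vertices and sets remain extendable in $D$. The extendable sets found in round $j$ lie in $N^+[Q_j]$, which is then deleted, so heads from different rounds are distinct. The same $E_j \ge n_j^2/(3A^2)$ bound applies in each $D_j$. In round $j$ I would take $X_j$ a constant fraction of $E_j$, giving at least roughly $(X_j/A - 1)(E_j - X_j)$ edges. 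Summing over $t \approx n/(2A)$ rounds, keeping $n_j \ge n/2$, and dividing by the total number of extendable tails should give the claimed $\frac{1}{81} n^3/A^4$ main term. The $-\frac16 n/A$ correction should come from the $-1$ in $k_j \ge X_j/A - 1$, summed over the rounds and normalized.

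\textbf{Main obstacle.} The difficulty is choosing the normalization and the $X_j$ so that the exponents come out as $n^3/A^4$. If one averages naively over all (up to $n$) extendable vertices of $D$, the summed count gives the wrong power of $n/A$. One fix is to average, per round, only over extendable tails inside $D_j$ and pick the best round. Another is to give each tail weight according to how many rounds it survives. I would try the per-round averaging first, comparing $\sum_j k_j(E_j - X_j)$ against $\sum_j E_j$. I would then optimize $X_j = cE_j$ and the number of rounds; constants like $\frac{1}{81} = \frac{1}{3^4}$ should arise from $c$ and from $E_j \ge n_j^2/(3A^2)$ with $n_j \ge \frac23 n$.
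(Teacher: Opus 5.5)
\textbf{There is a genuine gap.} Your framework matches the paper's: iterate \cref{lemm:lemma_one}, CL step at each $Q_j$, count edges from extendable tails into the disjoint extendable head sets $S_i$, then average. But the proposal lacks the idea that makes the rounds accumulate, and the version you would try first cannot reach the bound. In a single round, \cref{lemm:lemma_one} gives only $k_j \le X_j/A \le n/A$ head sets, so averaging per round and picking the best round yields at most $n/A$ out-neighbors. Likewise $\sum_j k_j(E_j-X_j)/\sum_j E_j \le \max_j k_j \le n/A$: that ratio averages over tail--round pairs, not the out-degree of one vertex. Now $n/A$ is far below $\frac{1}{81}n^3/A^4$ when $A \ll n^{2/3}$, which is exactly where the lemma is needed (the corollary concerns $A < \frac13 n^{3/5}$). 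To get $n^3/A^4$, one vertex must collect edges into head sets from about $n/A$ different rounds.

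\textbf{The missing step is to choose the tails after all rounds.} Let $T$ be the set of extendable vertices of the final digraph $D''$; these are extendable in $D$ because $D''$ is a CL subgraph. Every vertex of $T$ is present in every intermediate $D_j$. Hence, for every head set $S_i$ of every round, all but at most $X$ vertices of $T$ lie in $N^-(S_i)$.

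This forces $X$ to be a single value fixed in advance from the a priori lower bound on $|T|$, not $X_j = cE_j$. The extendable vertices of $D_j$ may vastly outnumber those that survive to the end, and then the $X_j$ exceptional vertices of round $j$ could contain all of $T$.

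Concretely, the paper runs $\lceil n/(3A)\rceil$ rounds, so $|V(D'')| \ge 2n/3$ and $|T| \ge \frac{4}{27}n^2/A^2$ by \cref{lemm:weak_extendable_vertices}. It takes $X = \lfloor \frac{2}{27}n^2/A^2\rfloor \le |T|/2$ and $k = \lfloor X/A\rfloor$. Each of the at least $k\lceil n/(3A)\rceil$ pairwise disjoint head sets then receives edges from at least half of $T$. So some vertex of $T$ has at least $\frac12 k\lceil n/(3A)\rceil \ge \frac{1}{81}n^3/A^4 - \frac16 n/A$ extendable out-neighbors.

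Your idea of weighting tails by survival points this way. It only works once you restrict to tails that survive every round and set $X$ accordingly. Separately, $t \approx n/(2A)$ rounds with $n_j \ge n/2$ gives only the constant $\frac{1}{96}$. The constant $\frac{1}{81}$ comes from maximizing $t(n-tA)^2$, which happens at $t = n/(3A)$.
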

\begin{proof}
    Put $X = \lfloor\frac{2}{27} n^2 / A^2\rfloor$ and $k = \lfloor \frac{X}{A} \rfloor \ge \frac{2}{27} n^2 / A^3 - 1$.
    Set $D' = D$ and repeat the following $\lceil \frac{1}{3}n / A \rceil$ times:
    Apply \cref{lemm:lemma_one} to $D'$ to obtains a quasi-kernel $Q$
    such that $N^+[Q]$ contains $k$ disjoint non-empty sets $S_1, \dots, S_k$ with $|N^-(S_i)| \ge n - X$.
    Then, in particular, these $k$ sets have edges from all but $\le X$ vertices.
    Finally, if this is not the last repetition, CL~step at $Q_0$, which deletes $N^+[Q_0]$ from $D'$.

    Let $D''$ be the digraph that remains after all repetitions.
    Each repetition deletes at most $A$ vertices, and the last repetition doesn't delete anything, thus $|V(D'')| \ge \frac{2n}{3}$.
    Let $T$ be the set of extendable vertices of $D''$.
    By \cref{lemm:weak_extendable_vertices}, we have $|T| \ge \frac{4}{27} n^2/A^2 \ge 2X$.
    Hence, in $D$, each set $S_i$ has edges from at least half of vertices in $T$.
    Across all repetitions, there are $\ge k \cdot \frac{1}{3}n / A \ge \frac{2}{81} n^3 / A^4 - \frac{1}{3}n / A$ sets $S_i$.
    Hence, there is a vertex in $T$ with at least $\frac{1}{81} n^3 / A^4 - \frac{1}{6} n / A$ extendable out-neighbors
    in $D$.
\end{proof}

\begin{corollary}
    Let $n \ge 1$.
    Let $D$ be a $n$-vertex digraph with $|N^+[Q]| \le A$ for all quasi-kernels $Q$ of $D$.
    Then $A \ge \frac{1}{3} n^{3/5}$.
\end{corollary}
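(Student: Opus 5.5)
The plan is to combine \cref{lemm:weak_extendable_in} with the fact that no extendable vertex can have more than $A$ out-neighbors, and then solve the resulting inequality for $A$. By \cref{lemm:weak_extendable_in}, $D$ contains an extendable vertex $u$ with at least $\frac{1}{81} n^3/A^4 - \frac{1}{6} n/A$ extendable out-neighbors. Since $u$ is extendable, there is a quasi-kernel $Q \ni u$, and $N^+[u] \subseteq N^+[Q]$, so $|N^+(u)| < |N^+[Q]| \le A$. Therefore
\[
\frac{1}{81}\frac{n^3}{A^4} - \frac{1}{6}\frac{n}{A} \le A - 1 < A .
\]
Multiplying by $A^4$ gives $\frac{1}{81} n^3 < A^5 + \frac{1}{6} n A^3$.

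Next, argue by contradiction. Suppose $A < \frac13 n^{3/5}$. Then $A^5 < \frac{1}{243} n^3$. Also $A^3 < \frac{1}{27} n^{9/5}$, so $\frac{1}{6} n A^3 < \frac{1}{162} n^{14/5}$. Adding, the right-hand side is less than $\frac{1}{243} n^3 + \frac{1}{162} n^{14/5}$. This must be compared with $\frac{1}{81} n^3 = \frac{3}{243} n^3$, so we need $\frac{1}{162} n^{14/5} \le \frac{2}{243} n^3$. That is $n^{1/5} \ge \frac{3}{4}$, which holds for all $n \ge 1$. This contradicts the displayed inequality, so $A \ge \frac13 n^{3/5}$.

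Two side issues need checking.

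\begin{itemize}
\item \cref{lemm:weak_extendable_in} implicitly uses \cref{lemm:lemma_one} with $X \le n$. Here $X = \lfloor \frac{2}{27} n^2/A^2 \rfloor$, so if $A$ is very small relative to $n$, one might worry that $X > n$. In that regime, however, the corollary's conclusion would fail only if $A < \frac13 n^{3/5}$. Since the lemma is stated unconditionally, I will simply invoke it as stated.
\item $A$ must be positive, which holds because any quasi-kernel (one exists by \Chvatal--\Lovasz) has $|N^+[Q]| \ge 1$ when $n \ge 1$.
\end{itemize}

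The main (mild) obstacle is the constant bookkeeping in the final comparison, i.e.\ making sure the lower-order term $\frac16 n/A$ does not spoil the constant $\frac13$. The computation above shows there is enough slack for every $n \ge 1$.
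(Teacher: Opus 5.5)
Your proof is correct and follows the paper's argument: you invoke \cref{lemm:weak_extendable_in}, place the extendable vertex $u$ in a quasi-kernel $Q$ so that its out-neighbors are bounded by $A$, and derive a contradiction from $A < \frac13 n^{3/5}$. The only difference is bookkeeping, since you clear denominators by multiplying by $A^4$, whereas the paper chains $\frac{1}{81}n^3/A^4 - \frac16 n/A \ge \frac16 n^{6/5}/A > \frac12 n^{3/5} > A$ (and your side worry about $X \le n$ is harmless anyway, because \cref{lemm:large_root_n} gives $A \ge \sqrt{n}$ and hence $X \le \frac{2}{27}n$).
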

\begin{proof}
\cref{lemm:weak_extendable_in} yields an extendable vertex $u$ with $\ge \frac{1}{81} n^3 / A^4 - \frac{1}{6} n / A $ in-neighbors. Let $Q \supseteq \{u\}$ be any quasi-kernel. If $A < \frac{1}{3} n^{3/5}$ then
\[
   |N^+(Q)| \ge \frac{1}{81} n^3 / A^4 - \frac{1}{6} n / A \ge \frac{1}{3} n^{6/5} / A - \frac{1}{6} n / A
   \ge \frac{1}{6} n^{6/5} / A \ge \frac{1}{2} n^{3/5} > A. 
\]
This contradicts the assumption $|N^+[Q]| \le A$.
\end{proof}

\subsection{Proving a Stronger Bound}
Previously, we used \cref{lemm:weak_extendable_vertices} to prove \cref{lemm:weak_extendable_in},
and used that to prove the bound $A \ge \frac{1}{4} n^{3/5}$.
To obtain a stronger bound, we will bootstrap this approach: use \cref{lemm:weak_extendable_in}
to prove a stronger version of \cref{lemm:weak_extendable_vertices}, which in turn implies a stronger
version of \cref{lemm:weak_extendable_in}.

\begin{proposition} \label{prop:recursive_bounds}
    Fix a positive integer $A$. For every positive integer $n$, let $f(n)$ and $g(n)$, respectively,
    be the largest integer such that every digraph $D$ on at least $n$ vertices
    with $|N^+[Q]| \le A$ for all quasi-kernels $Q$ of $D$ contains
    \begin{itemize}
        \item at least $f(n)$ extendable vertices, and
        \item an extendable vertices with at least $g(n)$ extendable in-neighbors.
    \end{itemize}
    If there are no such digraphs, put $f(n) = g(n) = \infty$.
    The functions $f, g : \mathbb{N} \to \mathbb{N}_0 \cup \{\infty\}$ satisfy:
    \begin{enumerate}[(1)]
        \item $f, g$ are non-decreasing, $f(1) = 1$ and $g(1) = 0$.
        \item $f(n) \ge \frac{1}{3} n^2 / A^2$ for $n \ge 1$.
        \item $f(n) \ge f(n-A) + g(n)$ for $n > A$.
        \item $f(n) \ge \frac{n}{A \ln n} \cdot g(\lceil(1-1/\ln n) \cdot n \rceil)$ for $n > \max(6, A)$.
        \item $g(n) \ge \frac{n}{2 A \ln n} \cdot \Big\lfloor f(\lceil(1-1/\ln n) \cdot n\rceil) \Big/ (2A) \Big\rfloor$ for $n \ge 3$.
        \item Pick an integer $k \ge 0$ and set $n' = \lceil(1 - 1 / \ln n)^{2 k} \cdot n\rceil$. If $n'^{\,2} / (4 A^3 (\ln n')^2) \ge 2$, then
        \[
            f(n) \ge \Big(\frac{n'^{\,2}}{4A^3 (\ln n')^2}\Big)^{k}\cdot (f(n') - 4 A) + 4A.
        \]
    \end{enumerate}
\end{proposition}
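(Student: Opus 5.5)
The plan is to prove the six items one at a time, in the listed order. Items (1)--(3) are direct consequences of the definitions and of the CL step machinery. Items (4) and (5) are quantitative refinements of \cref{lemm:weak_extendable_vertices} and \cref{lemm:weak_extendable_in}, in which the number of vertices deleted is controlled by a factor $1/\ln n$ instead of a constant. Item (6) is obtained by chaining (4) and (5) and inducting on $k$.

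\textbf{Items (1)--(3).} Monotonicity in (1) is immediate, because $f(n)$ and $g(n)$ are defined as infima over all digraphs with \emph{at least} $n$ vertices. We also have $f(1)=1$, since a digraph has a quasi-kernel and hence an extendable vertex, and the single-vertex digraph gives $g(1)=0$. Item (2) is exactly \cref{lemm:weak_extendable_vertices}; to pass from ``exactly $n$'' to ``at least $n$'' we use that $n^2/(3A^2)$ is increasing in $n$.

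For (3), let $D$ have at least $n>A$ vertices. Take an extendable vertex $u$ with $g(n)$ extendable in-neighbors $w_1,\dots,w_{g}$, and let $Q\ni u$ be a quasi-kernel. We CL step at a suitable independent extendable set, which deletes at most $A$ vertices by the remark after the definition of CL subgraphs. The result is a CL subgraph $D'$ on at least $n-A$ vertices. Every vertex that is extendable in $D'$ is extendable in $D$, so $D'$ contributes $f(n-A)$ extendable vertices of $D$.

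The delicate point is to make these disjoint from the $w_i$, so that the counts add. I would try to choose the stepped set so that it contains or dominates the $w_i$. One option is to CL step at $N^-$-side data, for instance at a quasi-kernel built from the set $S_i$ of \cref{lemm:lemma_one} that the $w_i$ point into. Then every $w_i$ lies in $N^+[\cdot]$ and is deleted. I expect this disjointness to be the main obstacle among (1)--(3), and I would check the exact choice against how $g$ is used in (4).

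\textbf{Items (4) and (5).} These follow the proof of \cref{lemm:weak_extendable_in}, but with roughly $n/(A\ln n)$ repetitions. Each repetition deletes at most $A$ vertices, so in total at most $n/\ln n$ vertices are removed and the surviving CL subgraph has at least $\lceil(1-1/\ln n)n\rceil$ vertices.

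For (5), in each repetition we apply \cref{lemm:lemma_one} with $X\approx f(\lceil(1-1/\ln n)n\rceil)/2$. This yields $\lfloor X/A\rfloor$ extendable sets, and each set has edges from all but $X$ vertices. The final subgraph has $T$ extendable vertices with $|T|\ge 2X$, so each set receives edges from at least half of $T$. Averaging over $T$ then gives an extendable vertex adjacent to at least $\frac{n}{2A\ln n}\lfloor f(\cdot)/(2A)\rfloor$ of these sets.

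For (4), we instead CL step $n/(A\ln n)$ times, at quasi-kernels chosen so that each step exposes a fresh vertex with $g(\lceil(1-1/\ln n)n\rceil)$ extendable in-neighbors. The bound follows by summing over the steps. The hypotheses $n>\max(6,A)$ and $n\ge 3$ ensure that $\ln n>1$ and that the rounding behaves.

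\textbf{Item (6).} Substituting (5) into (4) gives
\[
f(n)\ge \frac{n^2}{2A^2\ln^2 n}\Big\lfloor f(n_1)/(2A)\Big\rfloor,
\qquad n_1=\lceil(1-1/\ln n)^2 n\rceil .
\]
Using $\lfloor y/(2A)\rfloor\ge (y-2A)/(2A)$ and monotonicity of $f$ and of $m^2/\ln^2 m$, one step becomes
\[
f(n)\ge c\,(f(n_1)-4A)+4A,\qquad c=\frac{n'^{\,2}}{4A^3\ln^2 n'}\ \ \text{for any } n'\le n_1 .
\]
The condition $c\ge 2$ is what lets us absorb the additive constant $4A$ into the affine form. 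We then iterate $k$ times, by induction on $k$, down to $n'=\lceil(1-1/\ln n)^{2k}n\rceil$. Along the way we bound the slowly varying factors $1/\ln$ below by their value at $n'$, which is the smallest argument in the chain.
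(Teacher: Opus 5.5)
There is a genuine gap at item (3). In the paper, (4) is obtained by iterating (3), and your own sketch of (4) rests on the same unproved step, so (4) is not established either.

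You let $g$ count extendable \emph{in}-neighbors $w_1,\dots,w_g$ of $u$. You then cannot make these $w_i$ disjoint from the $f(n-A)$ extendable vertices of the CL subgraph. Your suggested fix does not work. A CL step at a set $S$ deletes only $N^+[S]$, so vertices $w_i$ that point into $S_i$ lie in $N^-(S_i)$ and survive the step. Your sketch of (4) ("each step exposes a fresh vertex \dots\ summing over the steps") needs exactly this disjointness across steps.

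Note also what your argument for (5) actually produces. Each $S_i$ satisfies $|N^-(S_i)|\ge |V(D')|-X$, so your vertex of $T$ sends edges \emph{into} many of the disjoint extendable sets $S_i$. That is a vertex with many extendable \emph{out}-neighbors. Under your reading of $g$, you have therefore not proved (5) as you use it, and your (3) and (5) concern different quantities.

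The missing idea is that $g$ must count extendable \emph{out}-neighbors. This is what the paper's proofs of (3) and (5), and of \cref{lemm:weak_extendable_in}, actually use; "in-neighbors" in the statement is a slip.

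With that reading, (3) is one line: CL step at $u$ itself.
\begin{itemize}
\item Since $u$ lies in some quasi-kernel, $|N^+[u]|\le A$.
\item So $D-N^+[u]$ is a CL subgraph on at least $n-A$ vertices, and it still satisfies the hypothesis on quasi-kernels.
\item Its at least $f(n-A)$ extendable vertices are extendable in $D$.
\item They are disjoint from the $g(n)$ extendable out-neighbors of $u$, all of which were deleted.
\end{itemize}

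Then (4) follows by applying (3) at $n, n-A,\dots,n-kA$ with $k=\lfloor n/(A\ln n)\rfloor$ and using that $g$ is non-decreasing:
\[
f(n)\ \ge\ f\bigl(n-(k+1)A\bigr)+\sum_{j=0}^{k} g(n-jA)\ \ge\ (k+1)\,g(n-kA).
\]
Here $k+1\ge n/(A\ln n)$ and $n-kA\ge\lceil(1-1/\ln n)\,n\rceil$. The hypothesis $n>\max(6,A)$ guarantees $n-(k+1)A>0$, so every application of (3) is legal.

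The rest of your plan — (1), (2), the averaging argument for (5), and the affine induction in (6) — matches the paper's proof.
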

\begin{proof}
    Since we consider digraphs on \emph{at least} $n$ vertices, $f, g$ are non-decreasing.
    Every digraph has a quasi-kernel, and hence an extendable vertex. This shows (1).
    \cref{lemm:weak_extendable_vertices} shows (2).

    Let $n$ be a positive integer and let $D$ be a digraph on at least $n$ vertices
    with $|N^+[Q]| \le A$ for all quasi-kernels $Q$ of $D$.
    We mimic the proof of \cref{lemm:weak_extendable_vertices}.
    For (3), let $u$ be an extendable vertex in $D$ with at least $g(n)$ extendable out-neighbors.
    Obtain $D'$ by performing a CL step at $u$. Since $u$ is extendable, this deletes at most $A$
    vertices, so $D'$ contains at least $f(n - A)$ extendable vertices. On the other hand,
    this deletes at least $g(n)$ extendable vertices, so $D$ contains at least $f(n - A) + g(n)$
    extendable vertices. This shows (3).
    For (4), put $k = \lfloor \frac{n}{A \ln n} \rfloor$ and apply (3) $k+1$ times;
    this is possible since $n > \max(6, A)$ implies $n - (k+1) A > 0$.
    \[
        f(n) \ge f(n - (k+1) A) + g(n) + \dots + g(n - k A) \ge (k+1) \cdot g(n - k A)
    \]
    Since $k+1 \ge \frac{n}{A \ln n}$ and $n - k A \ge n - \lfloor \frac{n}{\ln n} \rfloor = \lceil(1-1/\ln n) \cdot n \rceil$, this shows (4).

    For (5), we mimic the proof of \cref{lemm:weak_extendable_in}.
    Put $X = \lfloor f(\lceil(1 - \frac{1}{\ln(n)}) \cdot n \rceil) / 2 \rfloor$
    and $k = \lfloor \frac{X}{A} \rfloor$.
    This is well-defined since $n \ge 3$.
    Set $D' = D$ and repeat the following $\lceil \frac{n}{A \ln n} \rceil$ times:
    Apply \cref{lemm:lemma_one} to $D'$ to obtains a quasi-kernel $Q$
    such that $N^+[Q]$ contains $k$ disjoint non-empty sets $S_1, \dots, S_k$ with $|N^-(S_i)| \ge n - X$.
    Then, in particular, these $k$ sets have edges from all but $X$ of the remaining vertices.
    Finally, if this is not the last repetition, perform a CL step at $Q$, which deletes $N^+[Q]$ from $D'$.

    After all repetitions, let $Y$ denote the extendable vertices in the remaining graph.
    We perform $(\lceil \frac{n}{A \ln n} \rceil - 1)$ CL steps,
    each of which deletes at most $A$ vertices. The number of vertices that remain is at least
    \[
        n - A \cdot \Big(\Big\lceil \frac{n}{A \ln n} \Big\rceil - 1\Big) \ge n - A \cdot \Big\lfloor\frac{n}{A \ln n} \Big\rfloor \ge n - \Big\lfloor\frac{n}{\ln n} \Big\rfloor = \Big\lceil\Big(1 - \frac{1}{\ln n}\Big) \cdot n\Big\rceil,
    \]
    so $|Y| \ge f(\lceil(1 - \frac{1}{\ln n}) \cdot n\rceil)$.
    By our choice of $X$, each $S_i$ has edges from at least half the vertex in $Y$.
    Across all repetitions, there are $\lceil \frac{n}{A \ln n} \rceil \cdot k \ge \frac{n}{A \ln n} \cdot k$
    different sets $S_i$. Hence, one vertex in $Y$ has at least
    $\frac{1}{2} \cdot \frac{n}{A \ln n} \cdot k$ extendable out-neighbors. This shows (5).

    Finally, (6) follows from (4) and (5) via induction over $k$: The statement is trivial for $k=0$, so suppose $k \ge 1$.
    We apply (4) followed by (5) to get
    \[
        f(n) \ge \frac{n'^2}{2 A^2 (\ln n')^2} \cdot \Big\lfloor f(\lceil(1-1/\ln n)^2 \cdot n\rceil) \Big/ (2A) \Big\rfloor \ge \frac{n'^2}{4 A^3 (\ln n')^2} \cdot \Big(f(\lceil(1-1/\ln n)^2 \cdot n\rceil) - 2A\Big).
    \]
    By induction, this is at least
    \[
        \frac{n'^{\,2}}{4 A^3 (\ln n')^2} \cdot \Big(\Big(\frac{n'^{\,2}}{4A^3 (\ln n')^2}\Big)^{k-1} (f(n') - 4A) + 2A \Big) = \Big(\frac{n'^{\,2}}{4A^3 (\ln n')^2}\Big)^{k} (f(n') - 4A) + \frac{n'^{\,2}}{4 A^3 (\ln n')^2} \cdot 2A.
    \]
    Since we assume $\frac{n'^{\,2}}{4 A^3 (\ln n')^2} \ge 2$, this shows (6).
\end{proof}

\noindent
We now prove \cref{theo:main_large}.
Let $n \ge 2$ and suppose that $A \le 0.2917 \frac{n^{2/3}}{\ln^{2/3}(n)} \le 2^{-2/3} e^{-4 \sqrt{3} / 9}\, \frac{n^{2/3}}{\ln^{2/3}(n)}$.
Are there any $n$-vertex digraphs $D$ with $|N^+[Q]| \le A$ for all quasi-kernels $Q$ of $D$?
If $n < 100$, then $A < \sqrt{n}$ and \cref{lemm:large_root_n} shows that no such digraphs exist.
Suppose therefore that $n \ge 100$.
Pick $k = \lfloor\ln(n) / \sqrt{12}\rfloor$,
then (6) defines $n' = \lceil(1 - 1/\ln(n))^{2k} \cdot n \rceil \ge n / e^{1/\sqrt{3}}$. This implies

\[
    \frac{n'^{\,2}}{4 A^3 (\ln n')^2} \ge \frac{n^2 / e^{2/\sqrt{3}}}{4 A^3 (\ln n)^2} = e^{2 / \sqrt{3}} > 3.173,
\]
so we may indeed apply (6). We get
\[
    f(n) \ge \Big(\frac{n'^{\,2}}{4 A^3(\ln n')^2}\Big)^k \cdot (f(n') - 4A) \ge e^{2 k / \sqrt{3}} \cdot (f(n') - 4A) \ge \frac{n^{1/3}}{e^{2/\sqrt{3}}} \cdot (f(n') - 4A),
\]
where the second inequality follows from $k \ge \ln(n) / \sqrt{12} - 1$.
Since $n \ge 100$, we have $A \le \frac{1}{7} n^{2/3}$.
We combine this with part (2) of \cref{prop:recursive_bounds} to show
\[
    f(n') \ge \frac{1}{3} n'^{\,2} / A^2 \ge \frac{49}{3 e^{2/\sqrt{3}}}\, n^{2/3} \ge 5 n^{2/3} \ge e^{2/\sqrt{3}} n^{2/3} + n^{2/3} \ge e^{2 / \sqrt{3}} n^{2/3} + 7 A.
\]
Plugging this into the previous equation shows $f(n) > n$, so $f(n) = \infty$. Hence,
for $n \ge 2$ and $A \le 0.2917 \frac{n^{2/3}}{\ln^{2/3}(n)}$,
there is no $n$-vertex digraph with $|N^+[Q]| \le A$ for all quasi-kernels $Q$.
This proves our main result.

\theoLarge*

\section{Reducing the Small quasi-kernel conjecture to the Large Quasi-Kernel Conjecture}

We show that any bound for the large quasi-kernel conjecture implies a similar bound for the small quasi-kernel conjecture.
\theoReduction*

To prove this theorem, we introduce a set $B$ of \emph{blue} vertices.
As an intermediate problem, we consider a variant of the large quasi-kernel conjecture 
where the goal is to maximize $|N^+[Q] \cap B|$ instead of $|N^+[Q]|$.
In other words, the quasi-kernel still needs to reach all vertices in $D$ via at most two edges,
but now we measure its ``size'' via the blue vertices.
We prove two reduction lemmas via this intermediate problem:

\begin{lemma} \label{lemm:blue_reduction}
    Let $f : \mathbb{N} \to [0, \infty)$ be a non-decreasing function such that
    every $n'$-vertex digraph has a quasi-kernel $Q$ with $|N^+[Q]| \ge f(n')$.
    Let $D$ be an $n$-vertex digraph, let $\beta > 0$
    and let $B \subseteq V(D)$ be a set with $|B| \ge \beta n$.
    Then $D$ has a quasi-kernel $Q$ with $|N^+[Q] \cap B| \ge (1-\sqrt{1-\beta}) \cdot f(\lceil(1-\sqrt{1-\beta})\, n\rceil)$.
\end{lemma}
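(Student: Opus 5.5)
The plan is to CL step away the ``blue-poor'' part of $D$ until every remaining vertex is blue-rich, and then apply the hypothesis on $f$ to what is left. The main unresolved step is the last one: turning blue-richness of individual vertices into a lower bound on the blue part of $N^+[Q'']$.

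Write $\gamma = 1-\sqrt{1-\beta}$. The identity driving the choice of $\gamma$ is
\[
\beta-\gamma = \sqrt{1-\beta}\,\bigl(1-\sqrt{1-\beta}\bigr) = (1-\gamma)\gamma .
\]
Call a vertex $v$ of the current CL subgraph \emph{blue-poor} if $|N^+[v]\cap B| < \gamma\,|N^+[v]|$, where the out-neighbourhood is taken in that CL subgraph.

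\textbf{Step 1 (peeling).} Starting from $D$, repeatedly CL step at a blue-poor vertex until none remains. Call the resulting CL subgraph $D''$ and let $n'' = |V(D'')|$.

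\textbf{Step 2 (counting).} Each CL step deletes a set whose blue fraction is below $\gamma$. So at most $\gamma(n-n'')$ blue vertices are deleted, and $D''$ still contains at least $\beta n-\gamma(n-n'')$ blue vertices. Since this number is at most $n''$, we get $(1-\gamma)n''\ge(\beta-\gamma)n$. By the identity above this gives $n''\ge\gamma n$, hence $n''\ge\lceil\gamma n\rceil$ and, by monotonicity of $f$, $f(n'')\ge f(\lceil\gamma n\rceil)$.

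\textbf{Step 3 (lifting).} Take a quasi-kernel $Q''$ of $D''$ with $|N^+_{D''}[Q'']|\ge f(n'')$. CL unstep back to $D$ in reverse order. This gives a quasi-kernel $Q\supseteq Q''$ of $D$, so $N^+[Q]\supseteq N^+_{D''}[Q'']$. It therefore suffices to show that at least a $\gamma$-fraction of $N^+_{D''}[Q'']$ is blue, which would give the bound $\gamma\, f(\lceil\gamma n\rceil)$ of \cref{lemm:blue_reduction}.

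\textbf{Main obstacle.} Step 3 is where I expect trouble. Blue-richness of each $q\in Q''$ does not control the union $\bigcup_{q}N^+[q]$, because the out-neighbourhoods overlap and the overlaps could be exactly the blue vertices. Two fixes I would try, in this order:

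\begin{enumerate}[(a)]
\item Replace the per-vertex test by a set test. CL step at any independent set $S$ whose $N^+[S]$ has blue fraction below $\gamma$, using the generalized CL step at $S$. Step 2 is unchanged. The stopping condition then says that every independent set of $D''$, in particular $Q''$, spans a blue-rich $N^+[Q'']$.
\item If terminating that process is awkward, apply the hypothesis on $f$ not to $D''$ itself but to an auxiliary digraph on $V(D'')$. In this digraph the non-blue out-neighbours are made irrelevant, for example by deleting edges into non-blue vertices that are already reached. The goal is that every quasi-kernel of the auxiliary digraph is still a quasi-kernel of $D''$ and its closed out-neighbourhood is at least a $\gamma$-fraction blue.
\end{enumerate}

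I would try (a) first. The peeling is monotone, since CL steps only shrink the digraph, so the process terminates, and the inequality in Step 2 carries over verbatim to sets. Under (a), the ratio $\gamma$ enters once in the count of Step 2 and once in the blue fraction of Step 3. These are the two factors $1-\sqrt{1-\beta}$ appearing in the statement of \cref{lemm:blue_reduction}.
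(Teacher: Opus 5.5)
Your proposal is correct once you commit to fix (a), and it is essentially the paper's proof: the paper peels at independent sets $S$ with $|N^+[S]\cap B|<\gamma\,|N^+[S]|$. It bounds the deletions by counting non-blue vertices, which is equivalent to your Step 2 count, and it applies the stopping condition to the independent set $Q'$ to get the $\gamma$-fraction of blue before CL unstepping. The per-vertex peeling in Step 1 fails for exactly the overlap reason you give, so Step 1 should be stated with independent sets from the outset.
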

\begin{lemma} \label{lemm:small_reduction}
    Let $g : \mathbb{N} \to [0, \infty)$ be a non-decreasing function such that
    every $n'$-vertex digraph $D$, for every $B \subseteq V(D)$ with $|B| \ge \frac{1}{2} n'$,
    has a quasi-kernel $Q$ with $|N^+[Q] \cap B| \ge g(n')$.
    Then every source-free $n$-vertex digraph has a quasikernel $Q$ with $|Q| \ge n - \frac{1}{3} g(2n)$.
\end{lemma}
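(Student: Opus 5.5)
Before building anything I will test the inequality exactly as written, $|Q| \ge n - \frac{1}{3} g(2n)$, on the smallest cases. The hypothesis only gives lower bounds on $|N^+[Q]\cap B|$, so it is satisfied by $g \equiv 0$. With $g \equiv 0$ the literal conclusion says that every source-free $n$-vertex digraph has a quasi-kernel with $|Q| \ge n$, i.e.\ $Q = V(D)$. That fails already for the directed $2$-cycle $u \to v \to u$. It is source-free with $n = 2$, and its quasi-kernels are $\{u\}$ and $\{v\}$, both of size $1 < 2$. So the first step of my plan settles the statement as worded: it cannot be proved, and this $2$-cycle with $g \equiv 0$ is a counterexample.

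The text surrounding the lemma shows which statement is needed: \cref{theo:main_reduction} bounds $|Q|$ from above. What I would therefore actually prove is the version with $\le$, namely a quasi-kernel with $|Q| \le n - \frac{1}{3} g(2n)$. The construction is as follows.
\begin{enumerate}[(a)]
\item Form an auxiliary digraph $D^*$ on $2n$ vertices by attaching to every $v \in V(D)$ a new blue twin $v'$ with edge $v \to v'$. Let $B$ be the set of these $n$ twins, so $|B| = \frac{1}{2}(2n)$ and the hypothesis applies to $D^*$.
\item The hypothesis gives a quasi-kernel $Q^*$ of $D^*$ with $|N^+[Q^*] \cap B| \ge g(2n)$.
\item Project back to $D$: let $Q_0 = Q^* \cap V(D)$ and let $G = \{v : v' \in N^+(Q^*)\}$. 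Then $Q_0 \subseteq G$ and $|G| \ge g(2n) - |Q^* \cap B|$.
\end{enumerate}

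The vertices of $G$ serve as ``good'' vertices, as in \cref{lemm:small_root_n}. I would use source-freeness of $D$ to exchange each $v \in Q_0$ for vertices that are already dominated. The main obstacle is this exchange: repairing the projected set into a genuine independent quasi-kernel of $D$ while losing at most a factor of $3$. To do it, I would CL~step at $Q_0$ in $D$, then CL~unstep from a quasi-kernel of the remainder that avoids in-neighbours of $Q_0$. The charging argument assigns each deleted good vertex at most three units of slack.
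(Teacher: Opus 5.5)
Your diagnosis of the statement is correct. With $\ge$ the lemma is false: $g\equiv 0$ is admissible, and the directed $2$-cycle has no quasi-kernel of size $2$. The intended conclusion is $|Q|\le n-\frac13 g(2n)$, which is what the paper proves and uses in \cref{theo:main_reduction}.

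However, your plan for the corrected statement has a genuine gap already at steps (a)--(c): with pendant sink twins $v\to v'$, the hypothesis gives no information about $D$. For \emph{any} quasi-kernel $Q_0$ of $D$, the set $Q^*=Q_0\cup\{v' : v\notin Q_0\}$ is independent and is a quasi-kernel of $D^*$ with $B\subseteq N^+[Q^*]$. Hence $|N^+[Q^*]\cap B|=n\ge g(2n)$, where $g(2n)\le |B|=n$ holds automatically. So the $Q^*$ handed to you may be a largest quasi-kernel of $D$, padded with blue vertices. Since $v$ is the only in-neighbour of $v'$, your set $G$ is exactly $Q_0$. These are quasi-kernel vertices, not ``good'' ones, and the bound $|G|\ge g(2n)-|Q^*\cap B|$ can be $\le 0$. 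Everything is therefore deferred to the unspecified exchange/charging step, which would have to prove the lemma with no input from $g$ at all.

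The missing idea is to make blue coverage impossible to obtain for free, and to make it certify an actual saving.

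\begin{enumerate}[(i)]
\item In the paper, the blue vertex for $u$ is a clone $\tilde u$ of a chosen in-neighbour $v\in N^-(u)$; this is where source-freeness enters. The clone has the same in- and out-neighbourhoods as $v$.
\item Take $Q'$ maximal. Then $\tilde u\in N^+[Q']$ forces $v\in N^+[Q']$, so $u$ is reached through $v$ and needs no representative of its own.
\item Combine this with a maximal matching $M$. If $|M|\ge\frac13 g(2n)$, \cref{lemm:big_matching} already gives a quasi-kernel of size $\le n-|M|$.
\item Otherwise $X=V(D)\setminus V(M)$ is independent, so each $u\in X$ has its chosen $v$ in $V(M)$.
\item Now choose representatives from $Q'\cap V(D)$. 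For each $w\in V(M)$, take one such vertex in $N^-_D[w]$ if $w\in N^+[Q']$, and in $N^-_D[N^-_D[w]]$ otherwise. For each $u\in X$ with $\tilde u\notin N^+[Q']$, take one within two steps of $u$.
\end{enumerate}

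This yields a quasi-kernel $Q\subseteq Q'\cap V(D)$ of $D$ with $|Q|\le 2|M|+n-g(2n)\le n-\frac13 g(2n)$.
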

Let us first show that these two Lemmas imply \cref{theo:main_reduction}:
By \cref{lemm:small_reduction}, every source-free $n$-vertex digraph has a quasikernel $Q$ with $|Q| \ge n - \frac{1}{3} g(2n)$
for a suitable function $g$.
By \cref{lemm:blue_reduction} with $\beta = \frac{1}{2}$, we may pick
\[
    g(2n) = \Big(1-\sqrt{1-\frac{1}{2}}\Big) f\Big(\Big\lceil\Big(1-\sqrt{1-\frac{1}{2}}\Big) \cdot 2n \Big\rceil\Big) = \frac{\sqrt{2} - 1}{\sqrt{2}} f(\lceil(2-\sqrt{2}) n \rceil),
\]
where $f$ is the function in \cref{theo:main_reduction}. This shows \cref{theo:main_reduction} since now
\[
    |Q| \ge n - \frac{1}{3} g(2n) = n - \frac{\sqrt{2} - 1}{3 \sqrt{2}} f(\lceil(2-\sqrt{2}) n \rceil).
\]

\subsection{Proof of \cref{lemm:blue_reduction}}
    Put $D' = D$. While $D'$ contains an independent set $S$ with $|N_{D'}^+[S] \cap B| < (1-\sqrt{1-\beta})\cdot |N^+[S]|$,
    CL~step at $S$. This deletes $N_{D'}^+[S]$, which includes at least $\sqrt{1-\beta} \cdot |N_{D'}^+[S]|$ vertices in $D'$
    that are not in $B$.
    Since $D$ contains at most $(1-\beta) n$ vertices not in $B$,
    we delete at most
    \[
        (1-\beta) n / \sqrt{1-\beta} = \sqrt{1-\beta}\cdot n
    \]
    vertices in total. In particular, $\ge (1-\sqrt{1-\beta})\cdot n$ vertices remain after all CL steps.
    Then, let $Q'$ be a quasi-kernel of $D'$ with $|N_{D'}^+[Q']| \ge f(\lceil(1-\sqrt{1-\beta})\,n\rceil)$. We have
    \[
        |N_{D'}^+[Q'] \cap B| \ge (1-\sqrt{1-\beta}) \cdot |N_{D'}^+[Q']| \ge (1-\sqrt{1-\beta}) \cdot f(\lceil(1-\sqrt{1-\beta}) n\rceil).
    \]
    Finally, repeatedly CL unstep to turn $Q'$ into a quasi-kernel $Q$ of $D$ with $|N_{D}^+[Q] \cap B| \ge |N_{D'}^+[Q'] \cap B|$.
    This shows \cref{lemm:blue_reduction}.

\subsection{Proof of \cref{lemm:small_reduction}}

We first prove a helper lemma.
\begin{lemma} \label{lemm:big_matching}
    Let $D$ be a $n$-vertex digraph with a matching $M$.
    Then $D$ has a quasi-kernel of size $\le n - |M|$.
\end{lemma}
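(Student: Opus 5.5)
The statement follows directly from the independence of quasi-kernels, with no CL steps needed beyond the existence result of \Chvatal and \Lovasz~\cite{chvatal_every_1974}. I read ``matching'' as a set $M$ of pairwise vertex-disjoint edges of $D$; an edge $uv \in M$ may be oriented either way, and only the fact that it is an edge of $D$ matters.

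Let $Q$ be any quasi-kernel of $D$, which exists by~\cite{chvatal_every_1974}. By definition, $Q$ is an independent set, so for every edge $uv \in M$ at most one of $u, v$ lies in $Q$. Hence each edge of $M$ has at least one endpoint in $V(D) \setminus Q$. Since the edges of $M$ are vertex-disjoint, these endpoints are pairwise distinct. Therefore $|V(D) \setminus Q| \ge |M|$, that is, $|Q| \le n - |M|$.

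No step here is a genuine obstacle. The only point to verify is that ``independent'' rules out both endpoints of a single edge lying in $Q$, regardless of its orientation, and this holds by definition. The lemma is really a bookkeeping device: the later proof of \cref{lemm:small_reduction} will presumably construct a large matching, for example from a quasi-kernel reaching many blue vertices in an auxiliary digraph on $2n$ vertices. This would explain the $g(2n)$ and the factor $\frac{1}{3}$ there.
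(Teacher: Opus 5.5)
Your proof is correct, and it takes a genuinely different and simpler route than the paper's.

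\textbf{The paper's route.} It builds a specific quasi-kernel with CL steps. It CL steps at one endpoint of each edge of $M$, using at most $|M|$ steps, so that all of $V(M)$ is deleted. It then takes any quasi-kernel $Q'$ of what remains, which has at most $n-2|M|$ vertices. Finally it CL unsteps, adding at most one vertex per step, for a total of at most $n-2|M|+|M| = n-|M|$.

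\textbf{What your route buys.} Your argument needs none of this machinery. An independent set contains at most one endpoint of each edge, and the edges of $M$ are pairwise disjoint. So \emph{every} independent set, and hence every quasi-kernel, has size at most $n-|M|$. The only input is that some quasi-kernel exists. This is a stronger conclusion than the paper's construction gives.

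It also avoids the orientation bookkeeping the CL-step argument needs. To delete both endpoints of an edge $uv$ in one step, one must step at the tail $u$, since $N^+[v]$ need not contain $u$. The paper's rule as written steps at $v$ first, so the roles of $u$ and $v$ should be swapped there.

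\textbf{A minor remark on your closing speculation.} In the proof of \cref{lemm:small_reduction}, the lemma handles only the case where a maximal matching satisfies $|M| \ge \frac{1}{3}g(2n)$. The blow-up to $2n$ vertices handles the complementary case, where one gets $|Q| \le 2|M| + n - g(2n)$. The factor $\frac{1}{3}$ comes from balancing this bound against $n-|M|$. This does not affect your proof.
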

\begin{proof}
    For each edge $uv$ in $M$ in order, CL step at $v$. If $v$ has already been deleted,
    but $u$ has not, CL step at $u$ instead. In total, we CL step at most $|M|$ times,
    and delete all vertices incident to $M$.
    Let $Q'$ be a quasi-kernel of the remaining graph. Then, $|Q'| \le n - 2 |M|$.
    CL~unstep all the way. This yields a quasi-kernel of $D$ of size $\le |Q'| + |M| \le n - |M|$.
\end{proof}

We now prove \cref{lemm:small_reduction}. Let $M$ be a maximal matching in $D$.
If $|M| \ge \frac{1}{3} g(2n)$, then \cref{lemm:small_reduction} follows from \cref{lemm:big_matching}.
Otherwise, for each vertex $u \in D$ one after the other, pick an arbitrary vertex $v \in N^-(u)$ and
``clone'' $v$ by adding a new vertex $\tilde{u}$ to $D$ with
$N^-(\tilde{u}) = N^-(v)$ and $N^+(\tilde{u}) = N^-(v)$.
(Clones of the same vertex form an independent set. Sometimes,
this is called an \emph{unbalanced blow up} of $D$.)
This yields a digraph $D'$ on $2n$ vertices.
Note that $X := V(D) - V(M)$ is an independent set, so if $u \in X$, then $v \in V(M)$.

Put $B = \{\tilde{u} \ |\ u \in V(D)\}$
and $\beta = |B| / (2n) = \frac{1}{2}$. By \cref{lemm:blue_reduction},
there is a quasi-kernel $Q'$ of $D'$ with $|N_{D'}^+[Q'] \cap B| \ge g(2n)$.
Without loss of generality, pick $Q'$ to be maximal. This ensures that $Q'$
contains all clones of a vertex plus the original vertex if it contains one of them.
In particular, $Q' \cap V(D)$ is a quasi-kernel of $D$.

Using $Q'$, we construct a small quasi-kernel $Q$ of $D$:

\begin{enumerate}[1),noitemsep,leftmargin=*]
    \item For each $u \in V(M) \cap N_{D'}^+[Q']$, add an (arbitrary) vertex from $Q' \cap V(D) \cap N_{D}^-[u]$ to $Q$. 
    \item For each $u \in V(M) - N_{D'}^+[Q']$, add a vertex from $Q' \cap V(D) \cap N_{D}^-[N_{D}^-[u]]$ to $Q$. 
    \item For each $u \in X$ with $\tilde{u} \notin N_{D'}^+[Q']$, add a vertex from $Q' \cap V(D) \cap N_{D}^-[N_{D}^-[u]]$ to $Q$.
\end{enumerate}
The first two cases together add $|V(M)|$ vertices to $Q$.
The third case adds at most ${|B - N_{D'}^+[Q']|} = n - |N_{D'}^+[Q'] \cap B|$ vertices to $Q$.
Recall that $|M| \le \frac{1}{3}g(2n)$.
Hence,
\[
    |Q| \le |V(M)| + (n - |N_{D'}^+[Q'] \cap B|) \le 2|M| + n - g(2n) \le n - \frac{1}{3} g(2n).
\]

Next, we check that $Q$ is a quasi-kernel of $D$.
In cases (1)-(3), we explicitly add a vertex in $N_D^-[N_D^-[u]]$ to $Q$. The only remaining case
is $u \in X$ with $\tilde{u} \in N_{D'}^+[Q']$. But then, $\tilde{u}$ is a clone of a vertex $v \in V(M)$ with
$v \in N_{D'}^+[Q']$. By case (1), $v \in N_D^+[Q]$, hence $u \in N_D^+[N_D^+[Q]]$ since we picked $v \in N_D^-(u)$.
This shows that $Q$ is a quasi-kernel of $D$, which concludes the proof of \cref{lemm:small_reduction}.

\subsection{Technical Details for Deriving \cref{theo:main_small} from \cref{theo:main_large}.}
There is a small technical hurdle when applying \cref{theo:main_reduction}
to \cref{theo:main_large}: The function $\frac{x^{2/3}}{\log^{2/3}(x)}$ is only non-decreasing for $x \ge e$, and
is not defined for $x = 1$. But, we can put $f(1) = f(2) = 0$ and $f(n) = 0.2917 \frac{n^{2/3}}{\log^{2/3}(n)}$ for $n \ge 3$,
so that $f$ is non-decreasing. Then, \cref{theo:main_reduction} shows \cref{theo:main_large} for, say, $n \ge 11$.
For $n=1$, \cref{theo:main_large} is vacuously true (there is no source free 1-vertex digraph),
and for $2 \le n \le 10$, \cref{theo:main_large} follows from \cref{lemm:small_root_n}.

\bibliography{zotero,references}

\end{document}